\def\thm@space@setup{%
  \thm@preskip=\parskip \thm@postskip=0pt
}
 \newtheorem{theorem}{Theorem}[section]
 \newtheorem{lemma}[theorem]{Lemma}
 \newtheorem{remark}[theorem]{Remark}
 \newtheorem{prop}[theorem]{Proposition}
 \newtheorem{conjecture}[theorem]{Conjecture}
\theoremstyle{definition}
 \newtheorem{definition}[theorem]{Definition}
 \newtheorem{questions}[theorem]{Questions}
\newenvironment{ex}
  {\pushQED{\qed}\exx}
  {\popQED\endexx}
\newcommand{\st}{\operatorname{st}}
\newcommand{\HF}{\operatorname{HF}}
\newcommand{\init}{\operatorname{in}}
\newcommand{\Lex}{\operatorname{Lex}}
\newcommand{\RevLex}{\operatorname{RevLex}}
\newcommand{\spn}{\operatorname{span}}
\newcommand{\K}{\mathbb{k}}
\begin{document}
\title{Subalgebras generated in degree two with minimal Hilbert function}

\author{Lisa Nicklasson\thanks{During the preparation of this work the author was partially supported by INdAM.
}}
\affil{Stockholm University \\ \normalsize \texttt{lisan@math.su.se} }
\date{}

\maketitle

\begin{abstract}
 \noindent What can be said about the subalgebras of the polynomial ring, with minimal or maximal Hilbert function? This question was discussed in a recent paper by M. Boij and A. Conca. In this paper we study the subalgebras generated in degree two with minimal Hilbert function. The problem to determine the generators of these algebras transfers into a combinatorial problem on counting maximal north-east lattice paths inside a shifted Ferrers diagram. We conjecture that the subalgebras generated in degree two with minimal Hilbert function are generated by an initial Lex or RevLex segment.
\end{abstract}

\section{Introduction}
In a recent paper by Boij and Conca \cite{Boij-Conca} the Hilbert function of a subalgebra of the polynomial ring is studied. They ask what can be said about the upper and lower bounds for the Hilbert function, in terms of the number of variables, the number of generators of the subalgebra, and the degree of the generators. This question is inspired by the Fröberg conjecture \cite{Frob} on the minimal Hilbert series of the quotient of a the polynomial ring with a homogeneous ideal. For a review of the Fröberg conjecture and related problems, see \cite{Frob-Lund}. In this note we will focus of subalgebras generated in degree two, with minimal Hilbert function. We conjecture that these algebras are always given by a Lex or RevLex segment, see Conjecture \ref{conj}. This conjecture is proved for three large classes of algebras in Theorem \ref{thm:main}. For the first class, the proof is by a computer computation, and for the other two by inductive arguments, using the first class as the base. 

Let $\K$ be a field, and let $R=\K[x_1, \ldots x_n]$ be the standard graded polynomial ring in $n$ variables. Let $R_d$ denote the $\K$-space of homogeneous polynomials of degree $d$ in $R$. For a linearly independent subset $W \subseteq R_d$, let $\K[W]\subseteq R$ be the subring of $R$ generated by the elements in $W$. Define the Hilbert function of such an algebra $\K[W]$ as $\HF(\K[W],i)=\dim_\K(\spn W^i)$. Given positive integers $u$ and $i$, how should we choose $W$ so that $|W|=u$ and $ \HF(\K[W],i)$ takes the smallest possible value? Proposition 3.3 in \cite{Boij-Conca} states that we should choose $W$ as a strongly stable set of monomials. 

\begin{definition}
 A set $W$ of monomials in $R_d$ is called \emph{strongly stable} if $m \in W$ and $x_i | m$ implies $(x_j/x_i) m \in W$ for all $j<i$. 
 
 We use the notation $\st(m_1, \ldots, m_s)$ for the smallest strongly stable set containing the monomials $m_1, \ldots, m_s$, and we say that $m_1, \ldots, m_s$ are \emph{strongly stable generators} of this set. 
\end{definition}

Let $L(n,d,u,i)$ denote the minimal value of $\HF(\K[W],i)$ among all strongly stable subsets $W \subseteq R_d$ of size $u$. The following three questions \cite[Questions 3.6]{Boij-Conca} are asked, for fixed parameters $d$, $n$, and $u$.
\begin{enumerate}
\item Is there a $W$ such that $\HF(\K[W],i)=L(n,d,u,i)$ for all $i$? 
\item Given $i$, can one characterize combinatorially the strongly stable set(s) $W$ such that $\HF(\K[W],i)$ $=L(n,d,u,i)$?
\item Suppose we have $W$ such that $ \HF(\K[W],2)=L(n,d,u,2)$. Does it follow that $\HF(\K[W],i)=L(n,d,u,i)$ for all $i$?
\end{enumerate}
 
 We will see in Example \ref{ex:persist_counterex} that the answer to the questions 1 and 3 is ``No''. Since there is not one generating set $W$ that minimizes $\HF(\K[W],i)$ for all $i$, it is not obvious what the meaning of ``minimal Hilbert function'' should be. For given parameters $n$, $u$, and $d$, there is always a finite number of strongly stable sets to consider. For each set we know that the Hilbert function is given by the Hilbert polynomial, for $i$ large enough. It follows that there will be an algebra with asymptotically minimal Hilbert function. From now on, we say that $\K[W]$ has \emph{minimal Hilbert function} if it is minimal in the asymptotic sense. That is, $\K[W]$ has minimal Hilbert function if there is a number $N$ such that $\HF(\K[W],i)=L(n,d,u,i)$ for all $i>N$. 
 Assuming this definition of minimal Hilbert function, it makes sense to specialize question 2 as follows.
 
 \emph{How can the strongly stable set(s) $W$ such that $\K[W]$ has minimal Hilbert function be characterized?}
 
 The aim of this paper is to study this question in the smallest non-trivial case w.\,r.\,t.\ the parameter $d$. Hence we fix $d=2$ from now on, and focus on strongly stable sets of monomials of degree two. An advantage with this restriction is the connection to combinatorics, as we will see in Section \ref{sec:mult}. The case $d \ge 3$ is discussed in Section \ref{sec:last}.
 
 To minimize the Hilbert function we firstly want to minimize the degree of the Hilbert polynomial. If there are exactly $n$ variables that occurs in the monomials in $W$, the degree of the Hilbert polynomial is $n-1$. Hence, for a given $u$, we want to choose a strongly stable set of $u$ monomials in as few variables as possible. Secondly, we want to minimize the leading coefficient of the Hilbert polynomial. Recall that, if the Hilbert polynomial is of degree $n-1$, the leading coefficient multiplied by $(n-1)!$ is the \emph{multiplicity} of the algebra, which we will denote $e(\K[W])$.

 \section{The multiplicity of subalgebras generated by a strongly stable set of degree two}\label{sec:mult}
 Strongly stable sets of quadratic monomials are also considered as bases of \emph{specialized Ferrers ideals}, which are studied in e.\,g.\ \cite{Corso-Nagel,CNPY}. These sets can be illustrated by a diagram, as in Figure \ref{fig:first_example}. The box in row $i$ and column $j$ corresponds to the monomial $x_ix_j$. Since $x_ix_j=x_jx_i$ we only need to consider boxes on and above the diagonal in the diagram. That the set is strongly stable means precisely that if the box on position $(i,j)$ is included in the diagram, so is everything above and to the left of $(i,j)$.  
 
\begin{figure}[ht]
\begin{minipage}[t]{0.6\textwidth}
\includegraphics[scale=0.8]{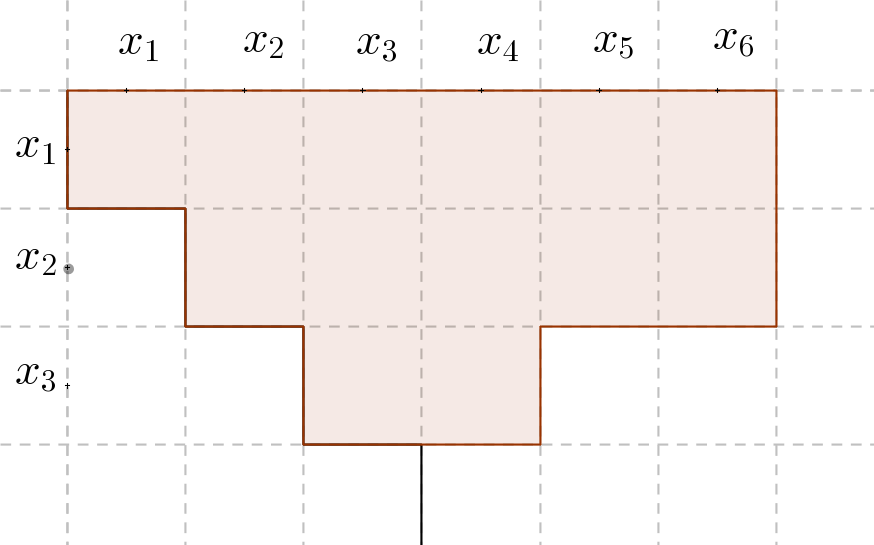}
\captionsetup{singlelinecheck=off}
\caption[.]{\vspace{-0.8cm}\begin{align*}
&\st(x_2x_6, x_3x_4) = \\
&\{x_1^2, x_1x_2, x_1x_3, x_1x_4, x_1x_5, x_1x_6, \\
& \quad x_2^2, x_2x_3, x_2x_4, x_2x_5, x_2x_6, x_3^2, x_3x_4\}
\end{align*}}\label{fig:first_example}
\end{minipage}\begin{minipage}[t]{0.4\textwidth}
\includegraphics[scale=0.7]{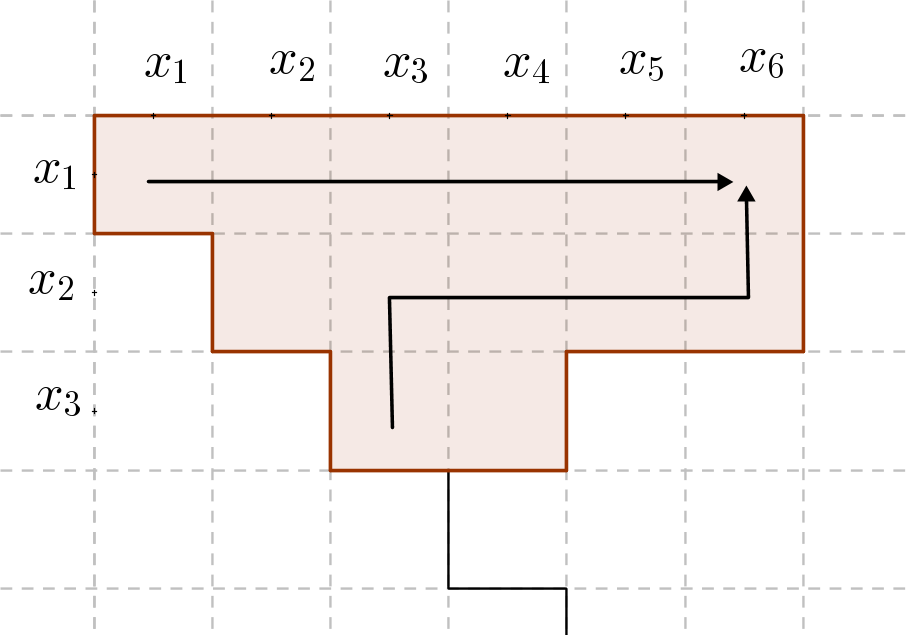}
\caption{Maximal NE-paths in a diagram.}\label{fig:paths}
\end{minipage}
\end{figure}
 
%The multiplicity can be computed by counting certain lattice paths in the diagram. 
Define an NE-path to be a lattice path in the diagram that can only go up or right (north or east). We say that an NE-path is \emph{maximal} if it is of maximal length, which implies that it starts in $x_i^2$ (on the diagonal) for some $i$, and goes to $x_1x_n$ (the upper right corner). An example of two such paths can be found in the Figure \ref{fig:paths}. It is proved in \cite[Theorem 4.2]{CNPY} that if $W$ is a strongly stable set of degree two monomials, then $\K[W]$ is isomorphic to a certain determinantial ring, which has a defining ideal with a quadratic Gröbner basis. It then follows from \cite[Corollary 1.9]{Conca} that the multiplicity is equal to the number of maximal NE-paths in the diagram. We collect this result as a theorem. 

\begin{theorem}\label{thm:multiplicity-paths}
 Let $W$ be a strongly stable set of monomials of degree two. Then $e(\K[W])$ is equal to the number of maximal NE-paths in the diagram representing $W$. 
\end{theorem}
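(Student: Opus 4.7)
The plan is to combine two existing structural results via Gröbner degeneration. First I would invoke \cite[Theorem 4.2]{CNPY} to present $\K[W]$ as a determinantal ring: the toric algebra generated by the quadratic monomials indexed by cells of the Ferrers diagram is isomorphic to a subring generated by $2\times 2$ minors of a generic matrix supported on the same shape, and its defining ideal is an ideal of such minors. Second, I would appeal to \cite[Corollary 1.9]{Conca}, which supplies a quadratic Gröbner basis with respect to a diagonal term order and reduces the multiplicity to a combinatorial count on the diagram.

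Concretely, I would start from the toric presentation $\K[t_{ij}:x_ix_j\in W]\to\K[W]$, $t_{ij}\mapsto x_ix_j$, whose kernel is generated, because of the Ferrers property, by the binomials $t_{ij}t_{kl}-t_{il}t_{kj}$ coming from the $2\times 2$ sub-blocks lying inside the diagram. A diagonal term order selects the leading terms $t_{ij}t_{kl}$ and produces a squarefree initial ideal. Since multiplicity is preserved under flat Gröbner degeneration, $e(\K[W])$ equals the number of facets of the Stanley--Reisner complex of this initial ideal.

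The crux, and the step I expect to be the most delicate, is the bijection between facets of that complex and maximal NE-paths. Unpacking the definitions, a facet is a maximal set of cells no two of which stand in strict NE-position $(i,j)$ versus $(k,l)$ with $i<k$ and $j<l$; equivalently, it is a maximal chain in the poset on cells defined by $(i,j)\preceq(k,l)\Leftrightarrow i\ge k$ and $j\le l$. By the shape of the diagram such a chain must begin at some $(i,i)$ on the diagonal and end at $(1,n)$, with each successive step being a unit move north or east, i.e.\ precisely a maximal NE-path. Counting the facets therefore yields the theorem. The only non-routine input is the first citation, which carries the work of identifying the toric algebra with a determinantal ring; the second citation and the combinatorial translation are then essentially book-keeping.
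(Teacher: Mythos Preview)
Your proposal is correct and follows exactly the route the paper takes: cite \cite[Theorem 4.2]{CNPY} for the determinantal presentation with quadratic Gr\"obner basis, then \cite[Corollary 1.9]{Conca} to pass to the squarefree initial ideal and count facets of the Stanley--Reisner complex, which you correctly identify with maximal NE-paths via the chain/antichain translation. One small terminological slip: with rows indexed downward, cells $(i,j)$ and $(k,l)$ with $i<k$ and $j<l$ are in strict SE (not NE) position, but your formulas and the subsequent poset argument are unaffected.
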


For a diagram $L$ of a strongly stable set, we will use the notation $e(L)$ for the number of maximal NE-paths in $L$. We illustrate the key points from \cite{Conca} and \cite{CNPY} that provides the proof of Theorem \ref{thm:multiplicity-paths} in Example \ref{ex:multiplicity}.

\begin{ex}\label{ex:multiplicity}
\begin{figure}[ht]
\centering
 \includegraphics[scale=0.65]{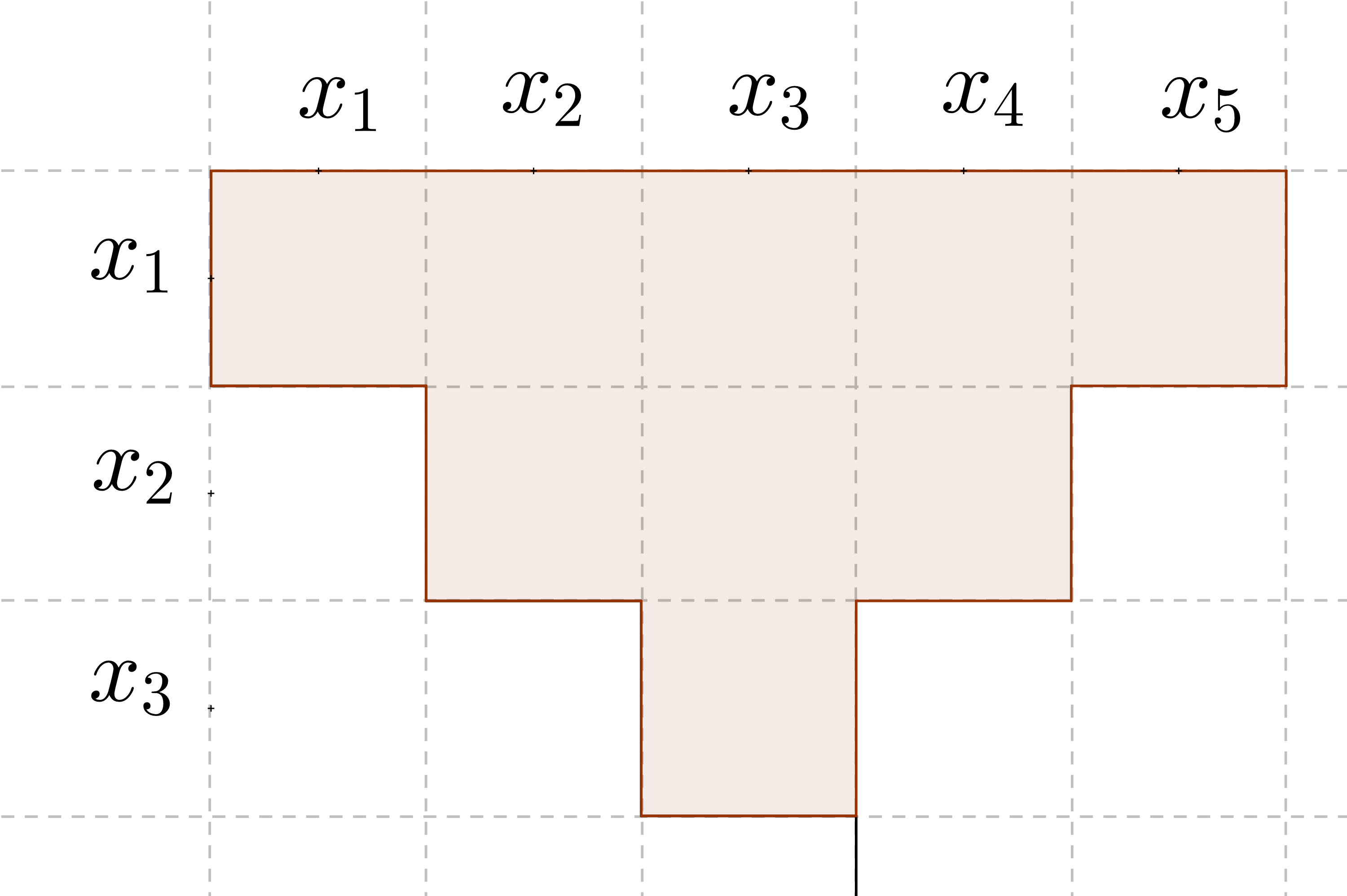}
 \caption{$ \st(x_1x_5,x_2x_4,x_3^2)$}
 \label{fig:ex_mult}
\end{figure}

Let $W=\st(x_1x_5,x_2x_4,x_3^2)$, the set in Figure \ref{fig:ex_mult}. Let
 \[ T=\K[y_{11},y_{12},y_{13},y_{14},y_{15},y_{22},y_{23},y_{24},y_{33}]\]
and consider the surjective homomorphism $\phi:T \to \K[W]$ defined by $y_{ij} \mapsto x_ix_j$. The kernel is given by 
\begin{align*}
 J=(&y_{11}y_{22}-y_{12}^2,y_{11}y_{23}-y_{12}y_{13},y_{11}y_{24}-y_{12}y_{14},y_{11}y_{33}-y_{13}^2,y_{12}y_{23}-y_{13}y_{22},y_{12}y_{24}-y_{14}y_{22},\\
 & y_{12}y_{33}-y_{13}y_{23},y_{13}y_{24}-y_{14}y_{23},y_{22}y_{33}-y_{23}^2),
\end{align*}
so $\K[W] \cong T/J$, and the Hilbert function of $\K[W]$, as we defined it, is the same as the Hilbert function of $T/J$ given the standard grading. The generating set given for $J$ is a Gröbner basis, under the Lex order with 
\[
 y_{11}>y_{12}>y_{13}>y_{14}>y_{15}>y_{22}>y_{23}>y_{24}>y_{33}.
\]
Hence the Hilbert function of $T/J$ is the same as the Hilbert function of 
\[
 T/\init(J) = T/(y_{11}y_{22},y_{11}y_{23},y_{11}y_{24},y_{11}y_{33},y_{12}y_{23},y_{12}y_{24},y_{12}y_{33},y_{13}y_{24},y_{22}y_{33}).
\]
This is the Stanley-Reisner ring with the facets 
\[
 y_{11}y_{12}y_{13}y_{14}y_{15}, y_{22}y_{12}y_{13}y_{14}y_{15}, y_{22}y_{23}y_{13}y_{14}y_{15}, y_{22}y_{23}y_{24}y_{14}y_{15},y_{33}y_{23}y_{13}y_{14}y_{15}, y_{33}y_{23}y_{24}y_{14}y_{15}.
\]
Notice that they all have the same dimension, and they correspond exactly to the maximal NE-paths of the diagram in Figure \ref{fig:ex_mult}. It is a known fact about Stanley-Reisner rings that the multiplicity is equal to the number of facets of maximal dimension, which here are exactly those listed above. 
\end{ex}
 
 For strongly stable sets in higher degrees, the ideal $J$ need not have a quadratic Gröbner basis. For this reason, Theorem \ref{thm:multiplicity-paths} does not generalize to higher degrees. 
 
Now, let us return to the questions 1 and 3 in the introduction.  
\begin{ex}\label{ex:persist_counterex}
 Let $n=12$, $d=2$ and $u=71$. There are five strongly stable sets of size 71, of monomials of degree two, in twelve variables, namely $W_1, \ldots, W_5$ illustrated in Figures \ref{fig:ex_persist1}-\ref{fig:ex_persist5}.
 
\begin{figure}[ht]
\begin{minipage}[t]{0.3\textwidth}
\includegraphics[scale=0.3]{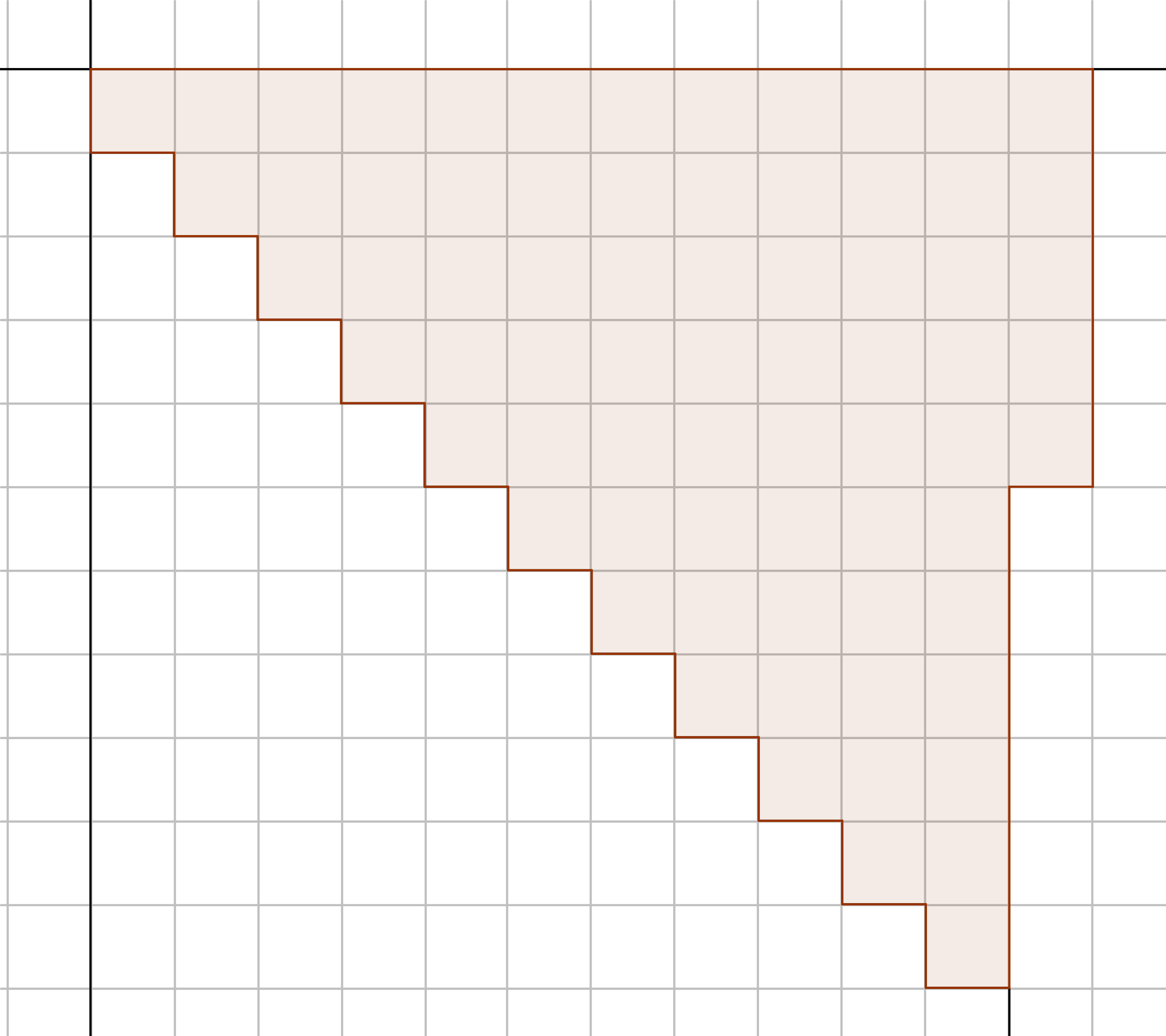}
\captionsetup{singlelinecheck=off}
\caption[.]{\vspace{-0.3cm}\begin{align*}W_1=\st(x_{11}^2,x_5x_{12})\end{align*}}\label{fig:ex_persist1}
\end{minipage}\begin{minipage}[t]{0.35\textwidth}
\includegraphics[scale=0.3]{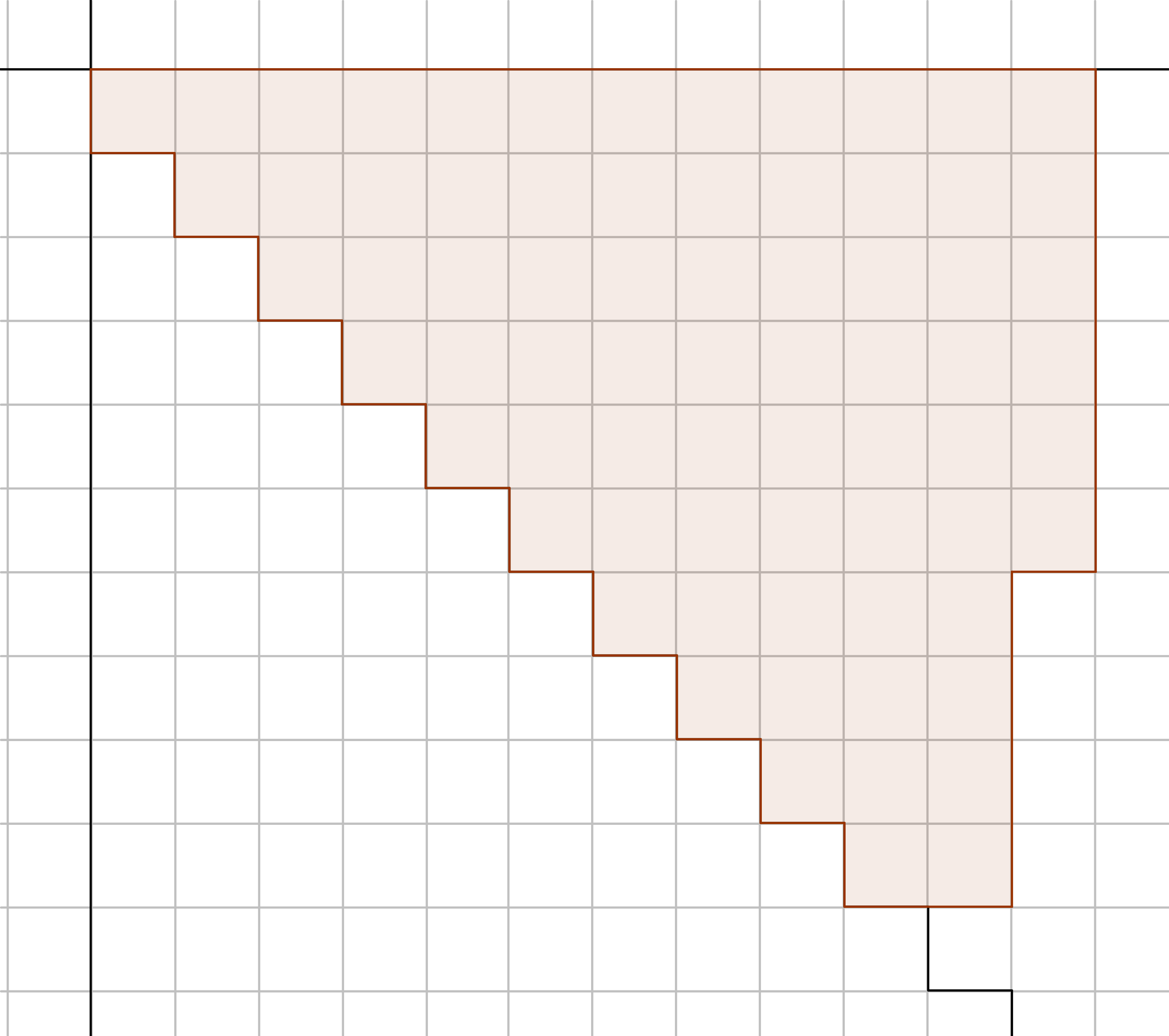}
\captionsetup{singlelinecheck=off}
\caption[.]{\vspace{-0.3cm}\begin{align*}W_2=\st(x_{10}x_{11},x_6x_{12})\end{align*}}\label{fig:ex_persist2}
\end{minipage}\begin{minipage}[t]{0.35\textwidth}
\includegraphics[scale=0.3]{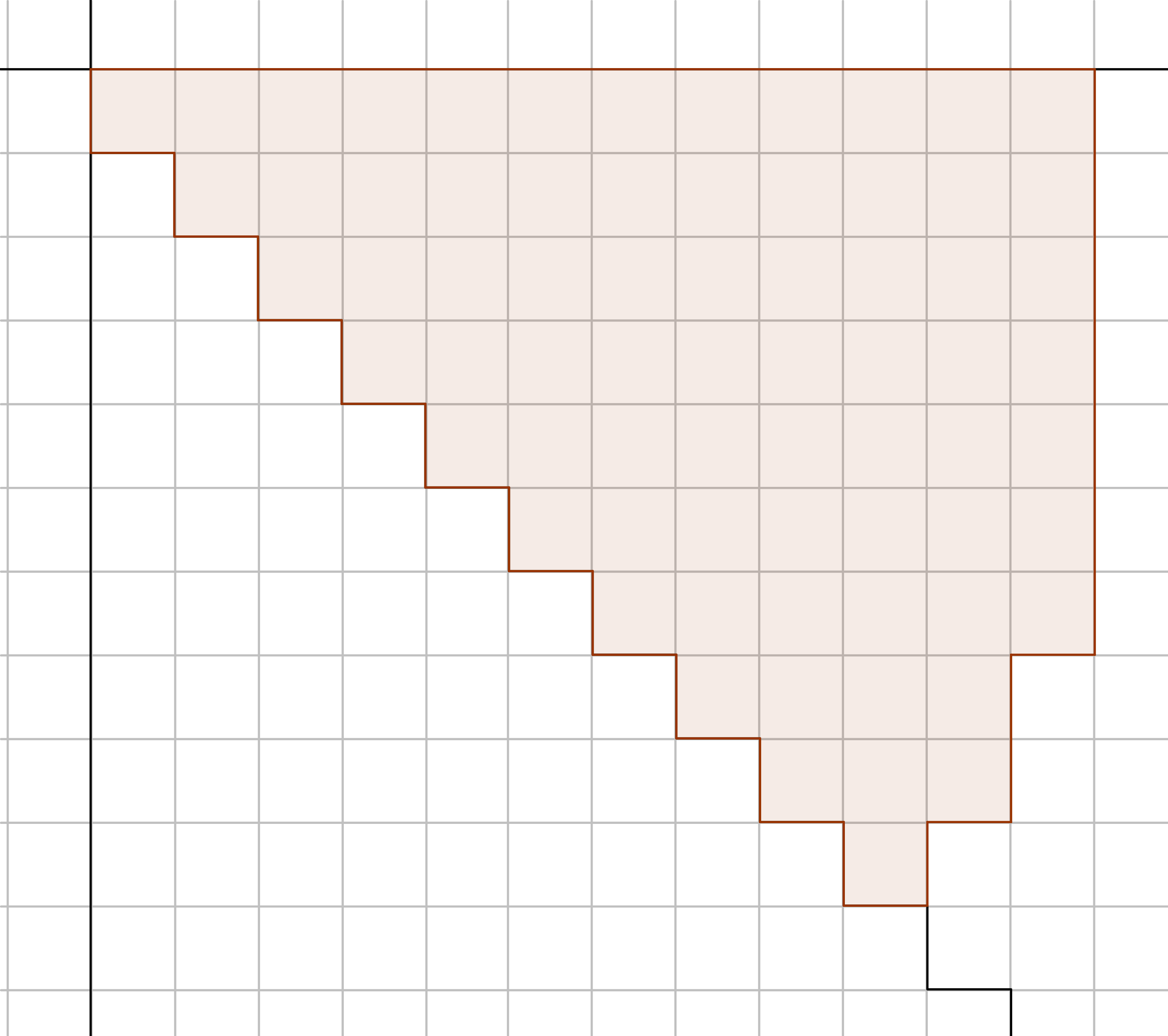}
\captionsetup{singlelinecheck=off}
\caption[]{\vspace{-0.3cm}\begin{align*}W_3=\st(x_{10}^2,x_9x_{11},x_7x_{12})\end{align*}}\label{fig:ex_persist3}
\end{minipage}
%\end{figure}

%\begin{figure}[ht]
\begin{minipage}[t]{0.4\textwidth}
\includegraphics[scale=0.3]{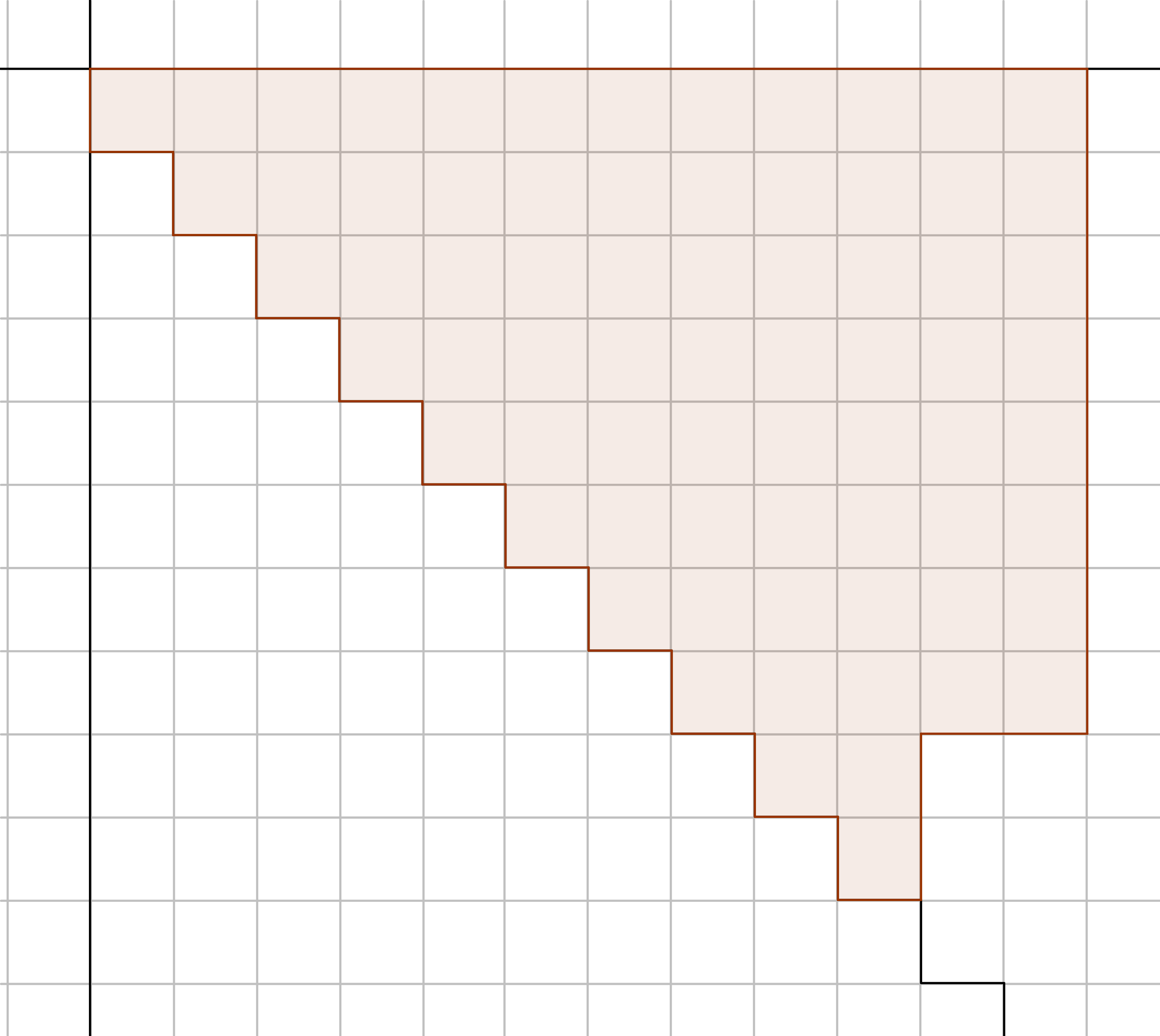}
\caption{$W_4=\st(x_{10}^2,x_8x_{12})$}\label{fig:ex_persist4}
\end{minipage}\begin{minipage}[t]{0.4\textwidth}
\includegraphics[scale=0.3]{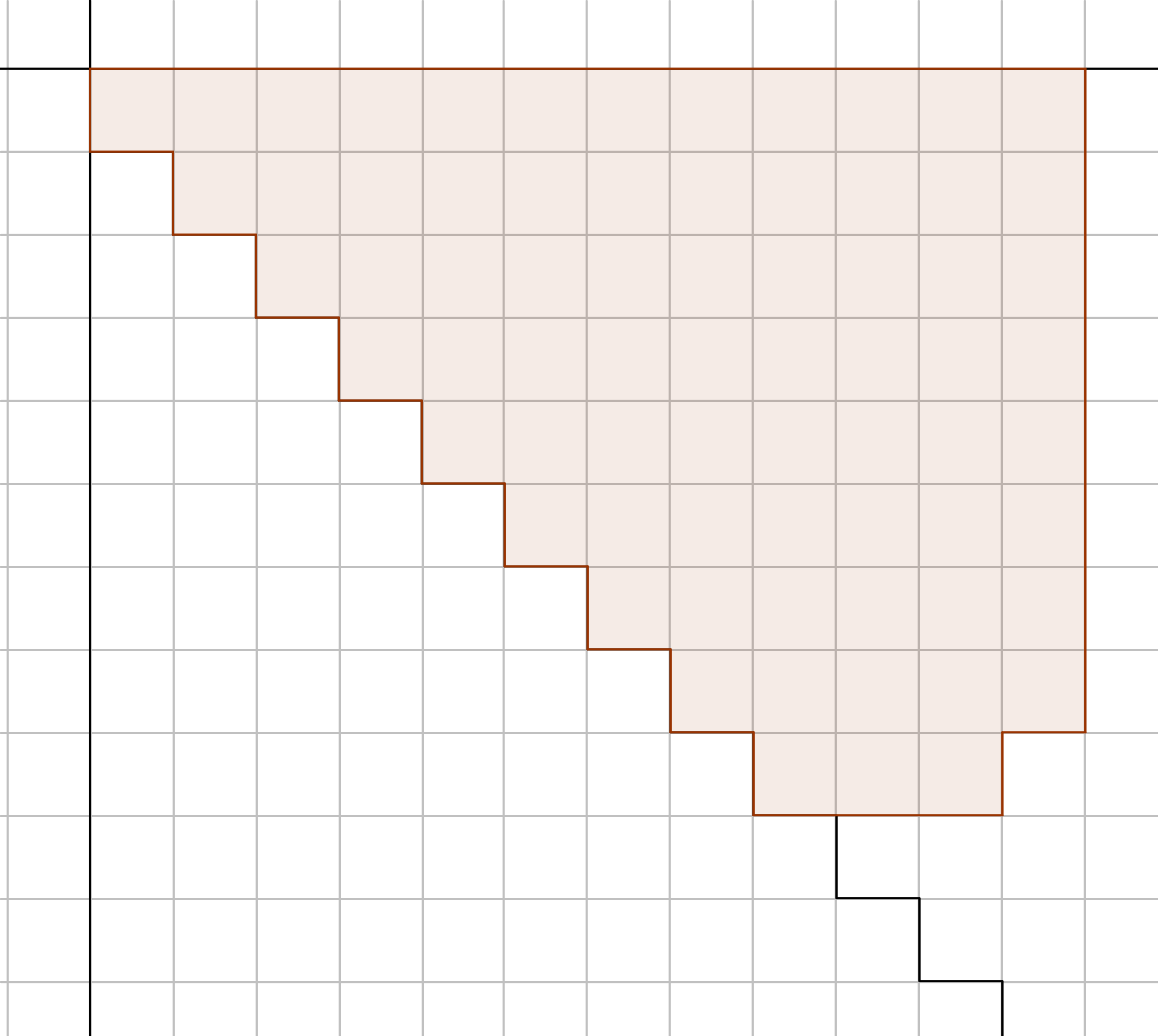}
\caption{$W_5 = \st(x_9x_{11},x_8x_{12})$}\label{fig:ex_persist5}
\end{minipage}
\end{figure}

To see that these are all the strongly stable sets, we may count the number of ways to remove seven boxes from the diagram of size 78 with 12 completely filled columns. If we remove seven boxes from the last column, we get $W_1$. If we remove boxes from the last two columns, there are three options which gives $W_2, W_3$ and $W_4$. There is only one way to remove seven boxes in the last three columns, which is $W_5$. Removing boxes from four or more columns will result in removing more than seven boxes. 

  A computation in Macaulay2 \cite{M2} gives
 \[
  e(\K[W_1])=1984, \ e(\K[W_2])=2010, \ e(\K[W_3])=2018, \ e(\K[W_4])=2008, \ e(\K[W_5])=1980,
 \]
which means that $\K[W_5]$ has the minimal Hilbert function, at least in the asymptotic sense. However, a computation of the Hilbert functions shows that $\HF(\K[W_5],i)$ is not minimal for $i=2$. The first $i$ for which  $\HF(\K[W_5],i)$ is minimal is $i=7$, as we can see in the following table. 

\begin{center}
\begin{tabular}{c|cccccc}
$i$          & 2     & 3     & 4     & 5    & 6         & 7\\
\hline
$\HF(\K[W_1],i)$ & \cellcolor{pink!45}1246 & \cellcolor{pink!45}11389 & \cellcolor{pink!45}70051 & \cellcolor{pink!45}328771 & \cellcolor{pink!45}1266005 & 4188859 \\
$\HF(\K[W_2],i)$ & 1256 & 11524 & 71012 & 333593 & 1285193 & 4253378\\
$\HF(\K[W_3],i)$ & 1259 & 11565 & 71306 & 335075 & 1291108 & 4273307\\
$\HF(\K[W_4],i)$ & 1255 & 11511 & 70922 & 333151 & 1283464 & 4247645\\
$\HF(\K[W_5],i)$ & 1248 & 11406 & 70124 & 328965 & 1266265 & \cellcolor{pink!45}4188404 
\end{tabular}
\end{center}
 This proves that the answer to the questions 1 and 3 is negative. 
 \end{ex}

\section{Subalgebras defined by Lex and RevLex segments}
Any initial segment of monomials of degree $d$, according to a monomial ordering in $\K[x_1, \ldots, x_n]$, is a strongly stable set. We will now focus on two monomial orderings, namely Lex and (graded) RevLex. In degree two, they may be defined as follows.

\begin{definition}
 Let $1 \le i \le j \le n$, and $1 \le k \le \ell \le n$. Then 
 \[x_ix_j >_{\Lex}x_kx_\ell \mbox{ if } i<k, \mbox{ or if } i=k \mbox{ and } j<\ell, \mbox{ and}\]
 \[x_ix_j >_{\RevLex}x_kx_\ell \mbox{ if } j<\ell, \mbox{ or if } j=\ell \mbox{ and } i<k.\]
\end{definition}
In terms of diagrams, we may say that $>_{\Lex}$ orders the monomials firstly by row, and secondly by column, and that $>_{\RevLex}$ orders the monomials firstly by column, and secondly by row. 
 
 Recall that, to minimize the degree of the Hilbert polynomial, we want to minimize the number of variables, with respect to the given $u$.  In terms of the diagram, we want the diagram to be such that we can not draw another diagram with the same number of boxes, but fewer columns. Since there are $\binom{n+1}{2}$ monomials of degree two in $n$ variables, we choose $n$ so that $\binom{n}{2} <u \le \binom{n+1}{2}$. Since $\binom{n+1}{2}-\binom{n}{2}=n$ we may write $u=\binom{n}{2}+r$ for some $0<r \le n$, or  $u=\binom{n+1}{2}-s$ for some $0\le s < n$. 

 \begin{definition}
  For a positive integer $u$, let $n$ be the unique number such that $\binom{n}{2} <u \le \binom{n+1}{2}$. Then we let $\Lex(u)$ be the set of the $u$ greatest monomials of degree two, according to $>_{\Lex}$. Similarily, we let $\RevLex(u)$ be the set of the $u$ greatest monomials of degree two, according to $>_{\RevLex}$. 
 \end{definition}
 
 The diagram in Figure \ref{fig:ex_persist1} is $\RevLex(71)$ and the diagram in Figure \ref{fig:ex_persist5} is $\Lex(71)$. 
 
\begin{remark}\label{rmk:large_u}
 For $u=\binom{n+1}{2}-s$ and  $s=0,1,$ or $2$ there is only one strongly stable set of size $u$ in $n$ variables, and this set is both a Lex and a RevLex segment. See Figure \ref{fig:large_u} for an example.
\end{remark}

\begin{figure}[ht]
 \begin{minipage}{0.3\textwidth}
  \includegraphics[scale=0.3]{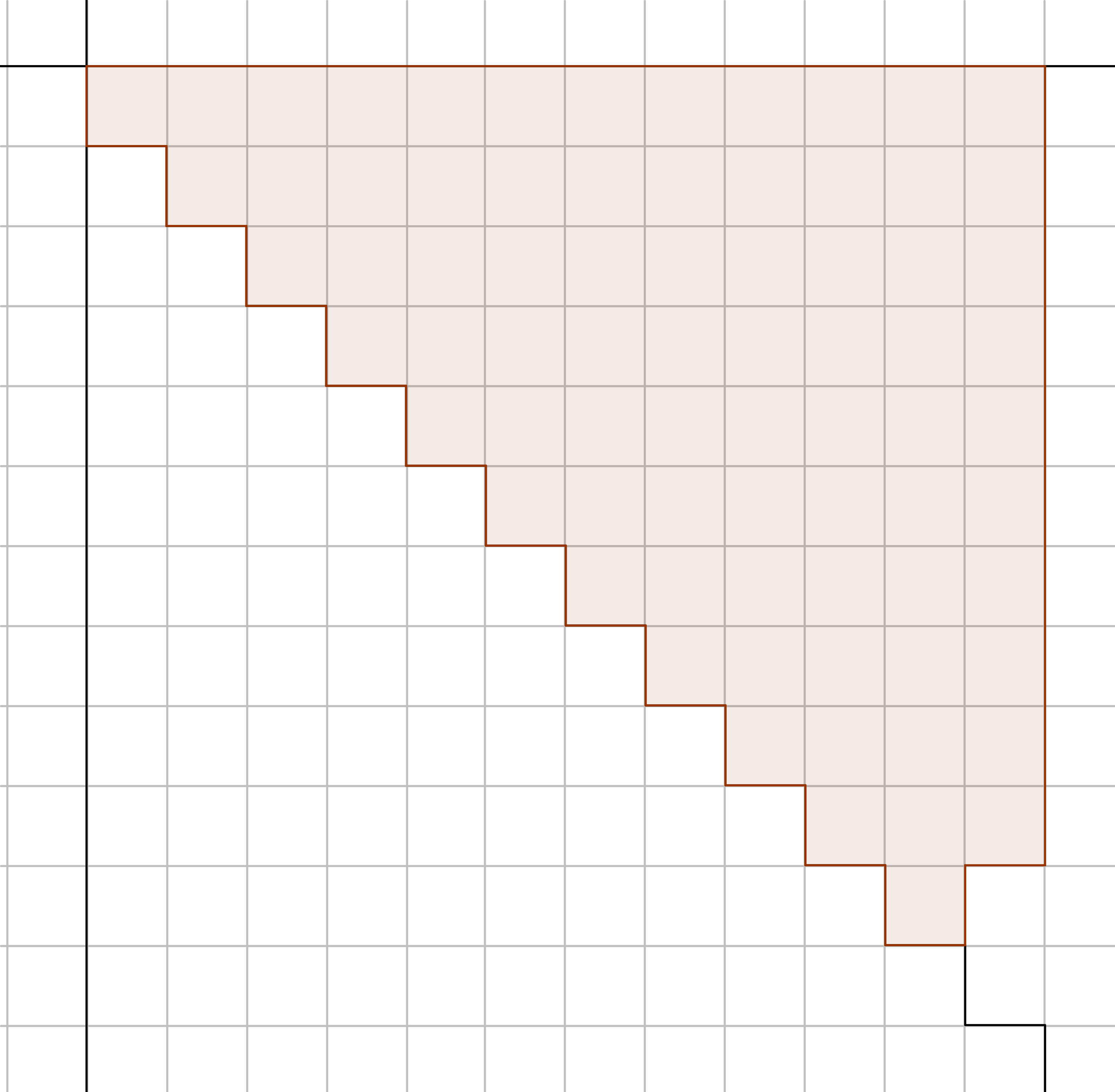}
 \end{minipage}\begin{minipage}{0.3\textwidth}
  \includegraphics[scale=0.3]{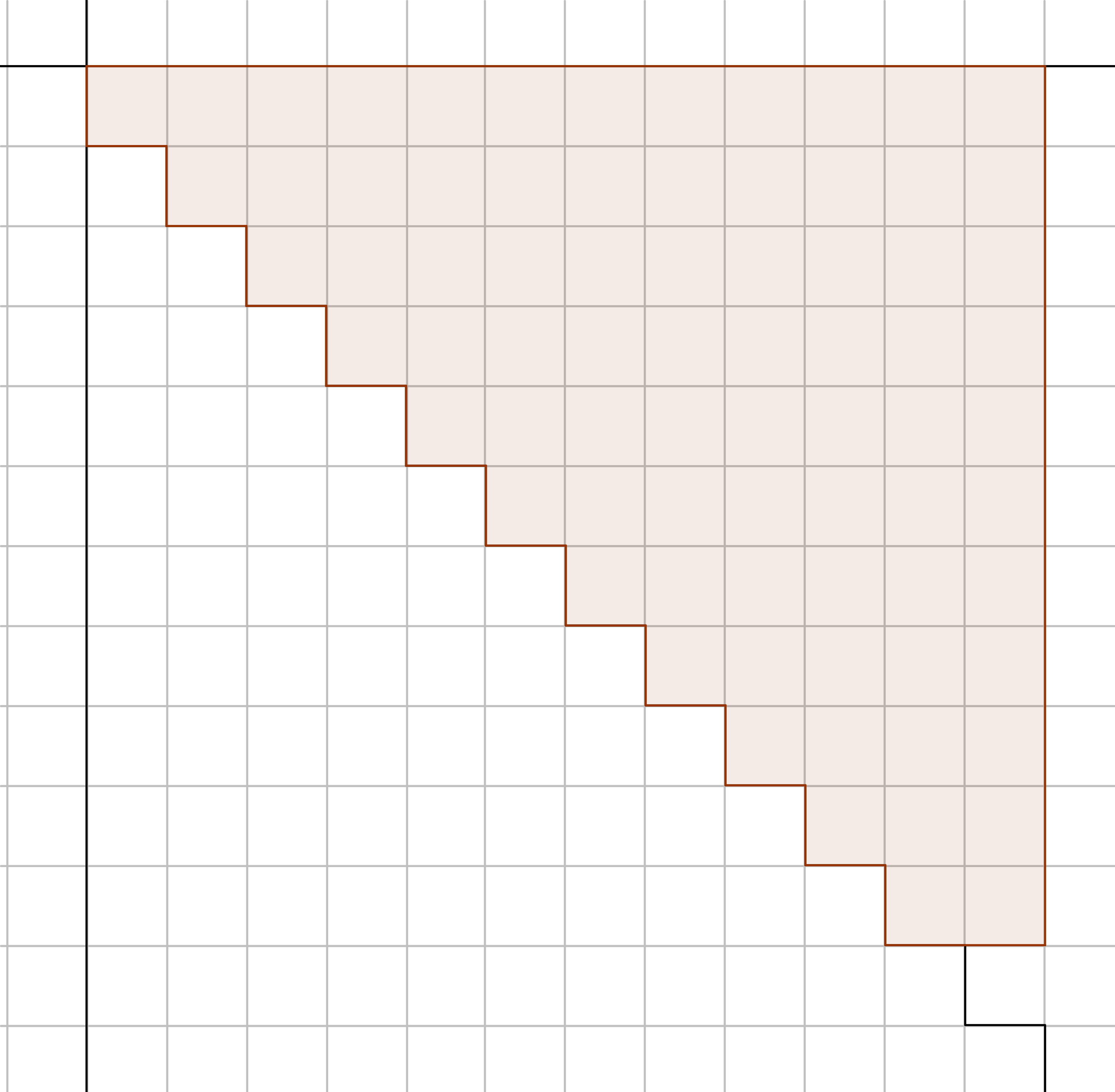}
 \end{minipage}\begin{minipage}{0.3\textwidth}
  \includegraphics[scale=0.3]{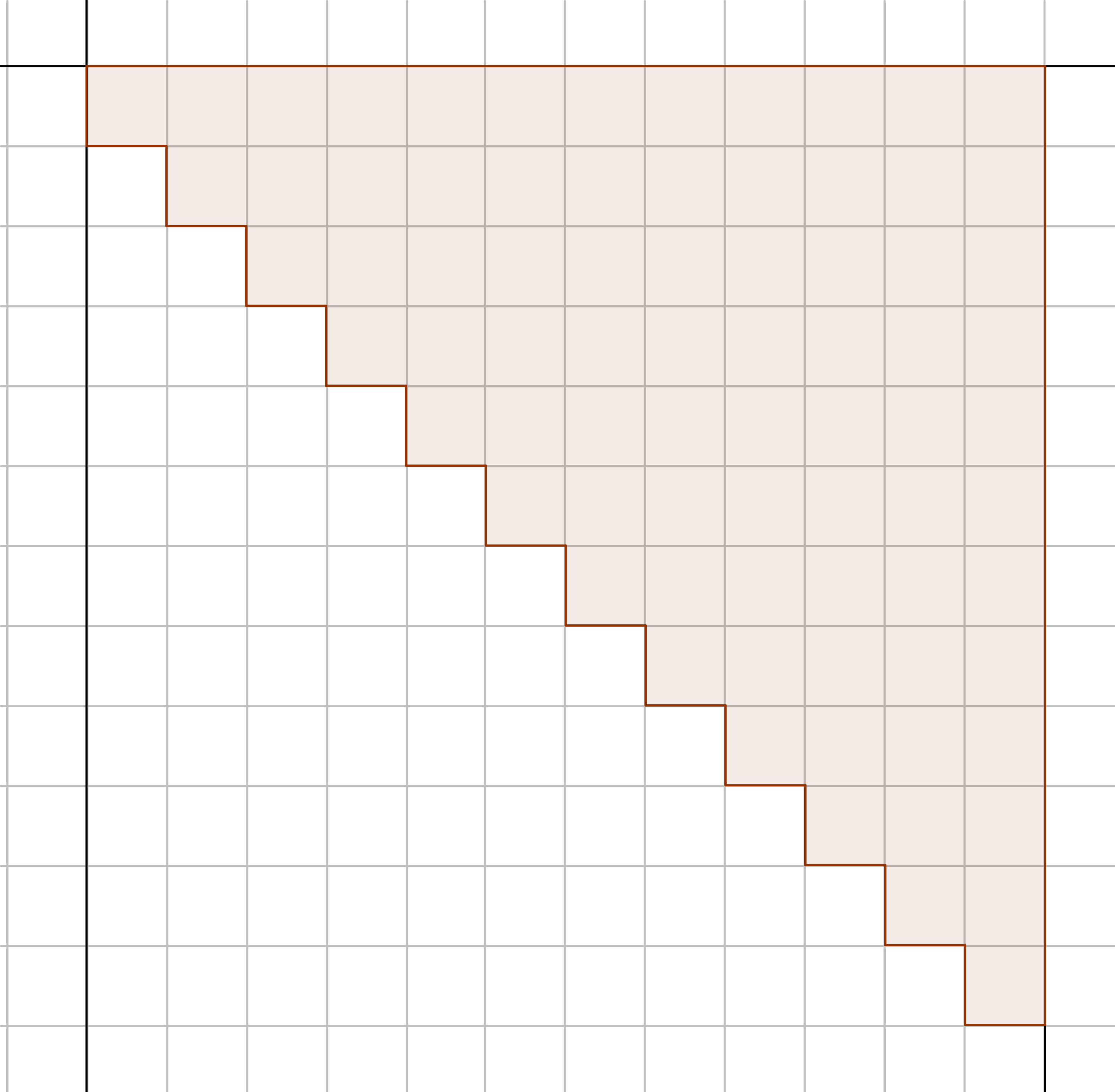}
 \end{minipage}\caption{The only strongly stable sets of sizes 76, 77, and 78 in 12 variables.}\label{fig:large_u}
\end{figure}

Notice that the subalgebra with the minimal Hilbert function in Example \ref{ex:persist_counterex} was generated by the set $\Lex(71)$, and the ``competition'' was between $\Lex(71)$ and $\RevLex(71)$.

\begin{conjecture}\label{conj}
 One of the algebras $\K[\Lex(u)]$ or $\K[\RevLex(u)]$ has the minimal Hilbert function, for a subalgebra of $\K[x_1, \ldots, x_n]$ generated by $u$ forms of degree two. 
\end{conjecture}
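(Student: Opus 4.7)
The plan is to reduce the conjecture to a combinatorial problem on shifted Ferrers diagrams and then attack it by induction together with a local exchange argument. First I observe that the asymptotic order on the Hilbert functions of the $\K[W]$ with $|W|=u$ is governed, in order of importance, by (a) the degree of the Hilbert polynomial, which equals $n-1$ whenever $W$ uses exactly $n$ variables, (b) the multiplicity $e(\K[W])$, and (c) the remaining coefficients of the Hilbert polynomial. Writing $u=\binom{n}{2}+r$ with $0<r\le n$, every strongly stable $W$ of size $u$ must live in at least $n$ variables, so $\Lex(u)$, $\RevLex(u)$, and all of their competitors share the degree of the Hilbert polynomial. By Theorem \ref{thm:multiplicity-paths} the multiplicity equals the number of maximal NE-paths in the diagram of $W$, so the decisive step is to prove that the minimum of $e(L)$, over all shifted Ferrers diagrams $L$ with $u$ boxes in the staircase on $n$ columns, is attained either by $\Lex(u)$ or by $\RevLex(u)$.

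The induction is on $s=\binom{n+1}{2}-u$, the number of boxes missing from the full staircase. The cases $s\in\{0,1,2\}$ are Remark \ref{rmk:large_u}, where the unique strongly stable set of that size is simultaneously $\Lex(u)$ and $\RevLex(u)$. For the inductive step, I would classify strongly stable $L$ by the shape of the complementary skew region of missing boxes, and introduce a swap operation that moves one missing box from one extremity of this region to another while preserving strong stability. The goal is to prove a local exchange lemma stating that each such swap alters the number of NE-paths monotonically, by constructing an explicit bijection between the paths avoiding the moved box in $L$ and those in the swapped diagram $L'$, together with a sign-definite injection on the remaining paths. Iterating these swaps should transport any $L$ along a monotone chain in the poset of shapes ending at either $\Lex(u)$ or $\RevLex(u)$ without ever producing a path count smaller than the minimum of those two distinguished diagrams.

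In parallel I would exploit the recursive structure of NE-paths by partitioning them according to the diagonal square $x_i^2$ from which they start. The count of paths from $(i,i)$ to $(1,n)$ inside the full staircase is the binomial coefficient $\binom{n-1}{i-1}$, and the actual $e(L)$ is obtained from the total $2^{n-1}$ by subtracting those paths that are forced to pass through a missing box. For the shapes $\Lex(u)$ and $\RevLex(u)$ the blocked paths admit transparent closed-form descriptions, which gives a baseline against which any candidate $L$ must be measured; the induction step then becomes an inequality of the form ``blocked paths in $L\le\max\!\bigl(\text{blocked in }\Lex(u),\ \text{blocked in }\RevLex(u)\bigr)$'', to be verified by reducing, through the swap lemma, to the three classes already resolved in Theorem \ref{thm:main}.

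The hardest part of the argument is that there is no single ``correct'' target: Example \ref{ex:persist_counterex} shows that for $n=12$, $u=71$ the optimum is $\Lex(u)$, whereas for other parameter values $\RevLex(u)$ is optimal, and the transition between the two regimes depends on $(n,u)$ in a nontrivial way. The swap lemma must therefore be sensitive to two competing monotonicities, and the induction has to branch according to which side of the transition we lie on. A further subtlety is tie-breaking: when two shapes share the same multiplicity, the conjecture still requires $\Lex(u)$ or $\RevLex(u)$ to win in the subleading coefficients, which forces a finer analysis of the Stanley--Reisner complex exhibited in Example \ref{ex:multiplicity} beyond merely its top-dimensional facets. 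Designing a uniform exchange principle that accommodates both monotonicities and survives these ties is where I expect the main obstacle to lie, and it is presumably why the paper settles for the three special classes in Theorem \ref{thm:main} rather than the full conjecture.
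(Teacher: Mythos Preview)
The statement under discussion is a \emph{conjecture} in the paper, not a theorem: the paper explicitly leaves it open and establishes only the three special cases collected in Theorem~\ref{thm:main}. There is therefore no ``paper's own proof'' to compare your proposal against, and what you have written is (as you concede in your last paragraph) a research strategy rather than a proof.

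The decisive gap is the ``local exchange lemma''. You assert that a single swap of a missing box alters the number of maximal NE-paths monotonically, but you neither make this precise nor prove it, and the data already in the paper show why it is delicate. For $n=12$, $u=71$ the five shapes, ordered in the dominance poset from $\RevLex$ to $\Lex$, have multiplicities $1984,\,2010,\,2018,\,2008,\,1980$; the sequence is not monotone along the chain, only unimodal. Your plan therefore needs, at minimum, a proof that from every non-extremal shape there exists \emph{some} swap that does not increase the path count, a much harder statement than a uniform monotone exchange, and one you do not supply. Your induction on $s=\binom{n+1}{2}-u$ also runs into trouble: passing from $s$ to $s+1$ changes the set of admissible shapes in a way that does not interact cleanly with the dominance order or with the branching between the Lex and RevLex regimes, and you give no mechanism to control this.

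Where your outline does align with the paper is in the reduction to NE-path counting via Theorem~\ref{thm:multiplicity-paths} and in recognising that the Lex/RevLex dichotomy depends subtly on $(n,r)$. But the partial results the paper actually proves do not proceed via any swap argument. Instead, Proposition~\ref{prop:lex} and Lemmas~\ref{lemma:revlex_step}--\ref{lemma:revlex_final} induct on $n$ with $s$ (respectively $r$) held fixed, using the recursion $e(L)=e(L')+e(L'')$ obtained by deleting the top row and the last column, the closed forms in Lemmas~\ref{lemma:mult_revlex} and~\ref{lemma:mult_lex}, the technical inequality of Lemma~\ref{lemma:revlex_binomsum}, and a computational base case at $n=80$. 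If your exchange lemma could be formulated precisely and proved, it would genuinely go beyond what the paper achieves; as it stands, however, it is an aspiration rather than an argument, and the conjecture remains open.
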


Conjecture \ref{conj} can also be phrased as a purely combinatorial statement, see Appendix \ref{app:comb_conj}.

\begin{remark}\label{rmk:data}
 Conjecture \ref{conj} is true for $n \le 80$, which means $u \le 3240$. This is proved by a computation of the multiplicities in Mathematica \cite{Math}. The results of the computation, as well as a description of how the computation was made, can be found in Appendix \ref{app:data}.
\end{remark}

For $u=7$ and $24$ the sets $\Lex(u)$ and $\RevLex(u)$ give the same Hilbert polynomial. For $u=40$ the sets $\Lex(u)$ and $\RevLex(u)$ give the same multiplicity. Computing their Hilbert polynomials in Macaulay2 gives 
\begin{align*}
 8!&\HF(\K[\Lex(40)],i) =  \\
 &240i^8  + 4248i^7  + 31640i^6  + 129192i^5  + 315560i^4  + 471072i^3  + 418640i^2  + 201888i + 40320
\end{align*}
and 
\begin{align*}
 8!&\HF(\K[\RevLex(40)],i) =\\
 &240i^8  + 4256i^7  + 31752i^6  + 129752i^5  + 316680i^4  + 471464i^3  + 417408i^2  + 200928i + 40320
\end{align*}
and we can see that the Lex ordering gives the minimal Hilbert function, as $\HF(\K[\Lex(40)],i)$ has the smaller coefficient for $i^7$. For $n \le 80$ the values $u=7, 24,$ and $40$ are the only values of $u$ for which $\Lex(u)$ and $\RevLex(u)$ give the same multiplicity, apart from $u=\binom{n+1}{2}-s$ with $s=0,1,2$, which we saw in Remark \ref{rmk:large_u}.

Conjecture \ref{conj} only states that the algebras with minimal Hilbert function are given by Lex or RevLex segments, it does not tell us which of the two orderings it is for a given $u$. One direct way to find out is of course to compute the multiplicities explicitly. 

\begin{lemma}\label{lemma:mult_revlex}
 For $\binom{n}{2} <u \le \binom{n+1}{2}$, we have
 \[
  e(\K[\RevLex(u)])= \begin{cases} 
  2^{n-1}-2^{n-r-1} & \mbox{for} \ r<n \\
  2^{n-1} & \mbox{for} \  r=n,
  \end{cases}
 \]
 where $0 <r \le n$ such that $u=\binom{n}{2}+r$.
\end{lemma}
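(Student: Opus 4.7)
By Theorem~\ref{thm:multiplicity-paths} the task is to count maximal NE-paths in the diagram of $\RevLex(u)$. Writing $u = \binom{n}{2} + r$ with $0 < r \le n$, this diagram consists of all cells $x_i x_j$ with $j \le n-1$ together with $x_1 x_n, \ldots, x_r x_n$; equivalently, it is the full triangle on $n$ variables with the cells $(r{+}1, n), \ldots, (n,n)$ deleted. The plan is to count all maximal NE-paths in the full triangle, then subtract those that pass through a deleted cell.

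For the full triangle I would use the bijection between maximal NE-paths and binary words in $\{\mathrm{N},\mathrm{E}\}^{n-1}$: any such word yields a path that ends at $(1,n)$ (so has length $n-1$), and if it contains $a$ north-steps then the starting cell is forced to be $(a{+}1, a{+}1)$, which lies on the diagonal of the triangle. Moreover every such path stays inside the full triangle. This gives exactly $2^{n-1}$ maximal paths, settling the case $r = n$.

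For $r < n$ a forbidden path is one visiting some cell $(k, n)$ with $k > r$. Such a path either starts at $(n,n)$ (the unique all-N path), or it enters column $n$ by an east step at some row $k$ with $r < k < n$, after starting on the diagonal at $(s,s)$ with $k \le s \le n-1$. The sub-path from $(s,s)$ to $(k, n-1)$ stays automatically inside the triangle and can be chosen in $\binom{n-1-k}{s-k}$ ways. Summing over $s$ collapses to $2^{n-1-k}$ by the binomial identity, and summing the resulting geometric series over $k = r+1, \ldots, n-1$ together with the single $(n,n)$-path yields
\[ 1 + \sum_{k=r+1}^{n-1} 2^{n-1-k} \;=\; 1 + (2^{n-r-1}-1) \;=\; 2^{n-r-1} \]
forbidden paths. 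Subtracting from $2^{n-1}$ produces the stated formula.

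The calculations are routine; the main point requiring care is the degenerate path starting at $(n,n)$ when $r<n$, which already lives in column $n$ and so must be tallied separately from the paths that reach column $n$ via an east step.
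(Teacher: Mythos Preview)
Your proof is correct and follows the same overall strategy as the paper: count all $2^{n-1}$ maximal NE-paths in the full triangle, then subtract those that leave the $\RevLex(u)$ diagram. The paper dispatches the subtraction in one line by observing that every forbidden path must pass through the single cell $x_{r+1}x_n$ (once a path reaches column $n$ it can only move north toward $x_1x_n$), and since this cell sits $n-r-1$ steps from the diagonal there are exactly $2^{n-r-1}$ such paths; your case split by entry row followed by the geometric-series summation reaches the same count but does more work than necessary.
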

\begin{proof}
 The total number of maximal NE-paths from the diagonal to the upper right corner of the diagram is $2^{n-1}$, since the paths have length $n-1$. If $r=n$ all these paths are inside the diagram. In the case $r<n$ we must subtract the number of paths that goes outside the RevLex-diagram. Those are exactly the paths going through the $x_{r+1}x_n$-box, which is $n-r-1$ steps from the diagonal. This gives us the formula $2^{n-1}-2^{n-r-1}$.
\end{proof}

\begin{lemma}\label{lemma:mult_lex}
 For $\binom{n}{2} <u \le \binom{n+1}{2}$, let $k$ be the largest integer such that $\st(x_{n-k}x_n) \supseteq \Lex(u)$. If $\st(x_{n-k}x_n) = \Lex(u)$, then 
 \[
  e(\K[\Lex(u)]) = \sum_{i=k}^{n-1} \binom{n-1}{i},
 \]
 and otherwise
 
 \[
  e(\K[\Lex(u)]) = \sum_{i=k}^{n-1} \binom{n-1}{i} - \binom{n-k+j-2}{j-1}
 \]
 where $j=|\st(x_{n-k}x_n)| - |\Lex(u)|$.
\end{lemma}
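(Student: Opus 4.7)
The plan is to apply Theorem \ref{thm:multiplicity-paths} and count maximal NE-paths in the diagram of $\Lex(u)$. Since Lex orders monomials first by row, $\Lex(u)$ fills rows $1,2,\ldots$ of the diagram completely in that order; the choice of $k$ forces the last non-empty row of $\Lex(u)$ to be row $n-k$. Thus rows $1,\ldots,n-k-1$ are complete, and row $n-k$ is either complete (Case 1: $\Lex(u)=\st(x_{n-k}x_n)$) or missing exactly its $j$ rightmost boxes $(n-k,n-j+1),\ldots,(n-k,n)$ (Case 2).

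In Case 1 the diagram is $\st(x_{n-k}x_n)$, and a maximal NE-path starts at some diagonal box $(i,i)$ with $1\le i\le n-k$ and ends at $(1,n)$, using $i-1$ up-steps and $n-i$ right-steps. Since every box in rows $1,\ldots,n-k$ lies in the diagram, no path from $(i,i)$ is obstructed, so there are $\binom{n-1}{i-1}$ of them. Summing and reindexing via $\binom{n-1}{i}=\binom{n-1}{n-1-i}$ gives
\[
\sum_{i=1}^{n-k}\binom{n-1}{i-1}=\sum_{i=0}^{n-k-1}\binom{n-1}{i}=\sum_{i=k}^{n-1}\binom{n-1}{i},
\]
which is the stated formula.

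In Case 2 the $j$ removed boxes all lie in row $n-k$. Any maximal NE-path starting at $(i,i)$ with $i<n-k$ stays in rows with index at most $i<n-k$, so it avoids the missing region and still contributes $\binom{n-1}{i-1}$ paths. The only affected source is $(n-k,n-k)$, from which I must subtract the paths meeting a removed box. The key observation is that a NE-path from $(n-k,n-k)$ meets a removed box if and only if it passes through the leftmost removed box $(n-k,n-j+1)$: once a path leaves row $n-k$ it cannot return, so reaching column $n-j+1$ while still in row $n-k$ forces the path to have gone straight right from $(n-k,n-k)$ through $(n-k,n-j+1)$. Hence the number of bad paths equals the number of NE-paths from $(n-k,n-j+1)$ to $(1,n)$, which uses $n-k-1$ up-steps and $j-1$ right-steps, giving $\binom{n-k+j-2}{j-1}$. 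Subtracting this from the Case 1 total yields the lemma.

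The main obstacle is precisely the combinatorial observation that makes Case 2 clean: the bad paths are captured by a single removed box (the leftmost one) rather than requiring inclusion--exclusion over all $j$ missing boxes. Once this is in hand the rest is elementary lattice-path counting, though one should also quickly verify boundary behaviour, e.g.\ that $k=0$ collapses to Case 1, and that the maximality of $k$ prevents $j=k+1$ (which would empty row $n-k$).
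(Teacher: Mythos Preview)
Your proof is correct and follows essentially the same approach as the paper: count the maximal NE-paths in the rectangular block $\st(x_{n-k}x_n)$ by starting row, then in Case~2 subtract those passing through the leftmost missing box $(n-k,n-j+1)$, using that there is a unique way to reach that box from the diagonal. The only cosmetic difference is that the paper indexes the initial sum by row length rather than row number, obtaining $\sum_{i=k}^{n-1}\binom{n-1}{i}$ directly without the symmetry reindexing.
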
 
\begin{proof}
 Let us first compute the number of maximal NE-paths in the diagram of $\st(x_{n-k}x_n)$. The number of maximal NE-paths starting in a row of length $i$ is $\binom{n-1}{i-1}$, as such a path is of length $n-1$ and should have precisely $i-1$ steps right. In $\st(x_{n-k}x_n)$ the first row has length $n$ and the last length $k+1$, so we get $\sum_{i=k}^{n-1} \binom{n-1}{i}$ paths. Now we must subtract those paths that are not inside the diagram of $\Lex(u)$. Those are precisely the paths that go through the $x_{n-k}x_{n-j+1}$-box, i. e. the first box in the last row that is not contained in $\Lex(u)$. There is only one way to go from the diagonal to $x_{n-k}x_{n-j+1}$. From there $n-k+j-2$ steps remains, and $j-1$ of those should be steps right. Hence we should subtract $\binom{n-k+j-2}{j-1}$, which proves the formula. 
\end{proof}

Even with the formulas for the multiplicities given in Lemma \ref{lemma:mult_revlex} and Lemma \ref{lemma:mult_lex}, it is not obvious which one gives the smaller value for a given $u$. Looking at the data in Appendix \ref{app:data}, the pattern is still not completely clear. However, two observations can be made. 

\begin{enumerate}
 \item For a given $n$, there are at most three shifts between Lex and RevLex. This has been confirmed by computation for $n \le 1000$.
 \item We have RevLex for small $r$, and Lex for large $r$ in the interval $1 \le r \le n$. For $n \ge 80$ we will see in Theorem \ref{thm:lex} that $\Lex(\binom{n}{2}+r)$ gives the minimal Hilbert function for $n-25 \le r \le n$, and in Theorem \ref{thm:revlex} that $\RevLex(\binom{n}{2}+r)$ gives the minimal Hilbert function for $1 \le r \le 50$. 
\end{enumerate}
 We summarize the cases where Conjecture \ref{conj} is proved in a theorem.

\begin{theorem}\label{thm:main}
 One of the algebras $\K[\Lex(u)]$ or $\K[\RevLex(u)]$ has the minimal Hilbert function, for a subalgebra of $\K[x_1, \ldots, x_n]$ generated by $u$ forms of degree two, in the following cases.
 \begin{itemize}
  \item $n \le 80$, in which case the algebras are listed in Appendix \ref{app:data},
  \item $u=\binom{n}{2}+r$ with $n \ge 80$ and $1 \le r \le 50$, in which case it is given by $\RevLex(u)$,
  \item  $u=\binom{n}{2}+r$ with $n \ge 80$ and $n-25 \le r \le n$, in which case it is given by $\Lex(u)$.
 \end{itemize}
\end{theorem}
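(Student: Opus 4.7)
The first case is verified by exhaustive enumeration: for each pair $(n,u)$ with $n\le 80$ and $\binom{n}{2}<u\le \binom{n+1}{2}$, I would list the strongly stable subsets of $R_2$ of size $u$ (equivalently, the shifted Ferrers shapes of that size in the staircase of width $n$), compute each multiplicity as a maximal NE-path count via Theorem~\ref{thm:multiplicity-paths}, and record the minimum. This is what is carried out in Appendix~\ref{app:data}; the only thing to check beyond running the enumeration is that it is complete and that the winner at every $u$ is indeed $\Lex(u)$ or $\RevLex(u)$.

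For the second bullet I would proceed by induction on $n$, with the base case $n=80$ supplied by the first bullet. Fix $r\in\{1,\ldots,50\}$ and take a strongly stable competitor $L\ne \RevLex(u')$ in $n+1$ variables with $u'=\binom{n+1}{2}+r$. The reduction I would use is ``delete the first row,'' which is always completely filled in any strongly stable set of this size; after reindexing $x_2,\ldots,x_{n+1}\mapsto x_1,\ldots,x_n$ this produces a strongly stable $L^\flat$ in $n$ variables of size $\binom{n}{2}+(r-1)$. The inductive hypothesis then applies whenever $r\ge 2$; the case $r=1$ is trivial since $\RevLex(u')$ is the unique strongly stable set of size $\binom{n+1}{2}+1$ in $n+1$ variables. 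Decomposing every maximal NE-path in $L$ by the column at which it first reaches row~$1$ yields an identity of the form $e(L)=1+\sum_{k} \alpha_k N_{L^\flat}(k)$ for explicit coefficients $\alpha_k$ and path-counts $N_{L^\flat}(k)$ in sub-diagrams of $L^\flat$; the analogous identity holds for $\RevLex(u')$ with $L^\flat$ replaced by $\RevLex(\binom{n}{2}+(r-1))$. Subtracting the two identities, invoking the inductive hypothesis on $L^\flat$, and using the closed form of Lemma~\ref{lemma:mult_revlex} gives $e(L)>e(\RevLex(u'))$. The third bullet is treated by a symmetric argument: in place of deleting the first row one contracts a full column at the right end of $L$, producing a competitor $L^\flat$ of the appropriate size in $n$ variables within the Lex-dominant range, and one compares path-counts using Lemma~\ref{lemma:mult_lex} instead.

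The main obstacle is matching the two sides of the path-count identity tightly enough to preserve the strict gain through the inductive step: when $L$ differs from $\RevLex(u')$ by several boxes, one must show that the accumulated contribution in the identity for $L$ strictly dominates the corresponding contribution for $\RevLex(u')$ rather than merely matching it, and that the reduced set $L^\flat$ never wanders out of the inductive range. This last constraint is precisely what forces the thresholds $r\le 50$ and $r\ge n-25$; outside that window both $\Lex(u)$ and $\RevLex(u)$ remain simultaneously competitive, and a uniform reduction stops being available, which is why the ``middle'' range has to be swept up by the finite computation in the first bullet. Checking that the slack available at the base $n=80$ actually survives propagation to every $n>80$ is expected to be the most delicate part of the accounting.
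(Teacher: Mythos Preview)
Your plan for the first bullet matches the paper and is fine. The inductive pieces, however, do not line up with the paper's proofs and contain real gaps.

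For the Lex bullet the paper's induction step (Proposition~\ref{prop:lex}) rests on the identity $e(L)=e(L')+e(L'')$, obtained by classifying maximal NE-paths by their \emph{last} step: if the last step is up one lands in $L'=L\setminus\{\text{top row}\}$, if it is right one lands in $L''=L\setminus\{\text{last column}\}$. Both $L'$ and $L''$ are strongly stable in one variable fewer, with $|L'|=\binom{n+1}{2}-s$ and $|L''|=\binom{n+1}{2}-s'$ for some $s'\le s$, so the induction hypothesis applies to each piece separately. Your description (``contract a full column at the right end'') misses that the last column is \emph{not} full and that a single deletion is not enough; one needs both halves of the split, and one also needs the monotonicity $e(\K[\Lex(\tbinom{n+1}{2}-s')])\ge e(\K[\Lex(\tbinom{n+1}{2}-s'')])$ for $s'\le s''$ to close the inequality.

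For the RevLex bullet your reduction ``delete the first row'' has two concrete problems. First, it changes the parameter from $r$ to $r-1$, so a single-step induction does not keep you in the range, and the claim that $r=1$ is trivial is false: $u'=\binom{n+1}{2}+1$ in $n+1$ variables admits many strongly stable sets (already for $n+1=4$ both $\st(x_2x_4)$ and $\st(x_1x_4,x_3^2)$ have size $7$). Second, the decomposition you write, $e(L)=1+\sum_k\alpha_k N_{L^\flat}(k)$, involves refined path counts $N_{L^\flat}(k)$ that the inductive hypothesis on $e(L^\flat)$ alone does not control; subtracting the two identities does not cancel anything useful. The paper instead strips the boxes in the last row off the diagonal and then removes the entire diagonal, yielding $e(L)\ge e(L')=2e(L'')$ with $|L''|\ge u_n$ and the \emph{same} $r$ preserved (Lemma~\ref{lemma:revlex_step}). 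The factor $2$ matches the exact doubling $e(\K[\RevLex(u_{n+1})])=2e(\K[\RevLex(u_n)])$ from Lemma~\ref{lemma:mult_revlex}. Crucially, this trick fails exactly when $L$ has a single strongly stable generator, i.e.\ when $L=\Lex(u_{n+1})$; that case occurs only at the sparse values $n=\binom{k}{2}+r$, and a separate binomial inequality (Lemma~\ref{lemma:revlex_binomsum}) is needed to show $e(\K[\RevLex(u_n)])<e(\K[\Lex(u_n)])$ propagates from one such $n$ to the next. This dichotomy between multi-generator and single-generator competitors, and the second induction along the values $n=\binom{k}{2}+r$, is the heart of the RevLex argument and is absent from your outline.

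Finally, the thresholds $r\le 50$ and $r\ge n-25$ are not forced by any failure of the reduction; they are simply the ranges for which the base case $n=80$ happens to give RevLex, respectively Lex, as the winner. The ``middle'' range for $n>80$ is not covered anywhere in the theorem, not swept into the first bullet.
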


\subsection{Lex segments}\label{sec:Lex}
In this section we will focus on algebras generated by sets $\Lex(u)$, typically for large $u$ in the interval $\binom{n}{2}<u\le \binom{n+1}{2}$. In this setting it is convenient to use the representation $u=\binom{n+1}{2}-s$. 

\begin{prop}\label{prop:lex}
Let $S$ and $n$ be fixed integers such that $0 \le S < n$. Suppose that for all $0 \le s \le S$, the algebra $\K[\Lex(\binom{n+1}{2}-s)]$ has minimal multiplicity, among all subalgebras of $\K[x_1, \ldots, x_n]$ generated by $\binom{n+1}{2}-s$ forms of degree two. 
Then $\K[\Lex(\binom{n+2}{2}-s)] \subset \K[x_1, \ldots, x_{n+1}]$ has minimal multiplicity for all $0\le s \le S$.
\end{prop}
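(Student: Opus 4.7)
The plan is to use a row-$1$ decomposition to reduce the $(n{+}1)$-variable problem to the hypothesis in $n$ variables. Since $s \le S < n$, we have $|W| = \binom{n+2}{2} - s > \binom{n+1}{2}$, so any strongly stable $W$ of this size must involve every variable, and strong stability then forces the entire first row $R_1 = \{x_1 x_j : 1 \le j \le n+1\}$ into $W$. Write $W = R_1 \sqcup W''$, where $W''$ is strongly stable in the $n$ variables $x_2, \ldots, x_{n+1}$ and $|W''| = \binom{n+1}{2} - s$. The analogous decomposition $L = R_1 \sqcup L''$ holds for $L := \Lex(\binom{n+2}{2}-s)$, with $L'' = \Lex(\binom{n+1}{2}-s)$ in $x_2, \ldots, x_{n+1}$, because a Lex-initial segment restricted to any subset of the variables is again Lex-initial there. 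The hypothesis in $n$ variables thus gives $e(\K[W'']) \ge e(\K[L''])$.

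Next, I would derive an identity expressing $e(\K[W])$ in terms of path-counts in $W''$. A maximal NE-path in the diagram of $W$ is either the trivial path from $(1,1)$ along the full row~$1$ to $(1, n+1)$, or it starts at $(i,i)$ with $i \ge 2$, traces a NE-path inside $W''$, makes a single up-step from row~$2$ into row~$1$ at some column $j$, and then continues right along the (complete) row~$1$ to $(1, n+1)$. Summing over $j$ and using Theorem \ref{thm:multiplicity-paths} yields
\[
 e(\K[W]) = 1 + T(W''), \qquad T(W'') := \sum_{j=1}^{n} q_j(W''),
\]
where $q_j(W'')$ is the number of NE-paths in the diagram of $W''$ starting on its diagonal and ending at the box $(1, j)$ (with the convention $y_i = x_{i+1}$). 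The same formula holds for $L$ with $L''$ in place of $W''$, so the proposition reduces to the path-count inequality $T(W'') \ge T(L'')$.

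The main obstacle is passing from the multiplicity inequality $e(\K[W'']) \ge e(\K[L''])$, which is only the $j=n$ instance $q_n(W'') \ge q_n(L'')$, to the sum over all $j$. To handle $j < n$, I would recognise each $q_j(W'')$ as the multiplicity of the strongly stable sub-diagram $W''^{[j]} = W'' \cap \{y_a y_b : a, b \le j\}$ in $j$ variables, and then compare $W''^{[j]}$ with $L''^{[j]}$. Because the Lex segment concentrates its removed boxes in the bottom-right of the diagram, in the range $s \le S < n$ one expects the staircase description of Lemma \ref{lemma:mult_lex} to imply $L''^{[j]} \subseteq W''^{[j]}$, at which point monotonicity of $e$ under adding boxes gives $q_j(W'') \ge q_j(L'')$ term by term. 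If the direct set-inclusion fails in some case, a back-up is a deformation argument: reduce $W''$ to $L''$ through a sequence of strongly stable sets via shifting moves that preserve strong stability and size, and track $T$ step by step, using the hypothesis only at the endpoint. Either way, once $T(W'') \ge T(L'')$ is established, $e(\K[W]) \ge e(\K[L])$ follows, completing the inductive step.
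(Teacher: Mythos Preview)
Your row-$1$ decomposition and the identity $e(\K[W]) = 1 + \sum_{j} q_j(W'')$ are correct, but the termwise comparison for $j<n$ breaks down. The containment $L''^{[j]} \subseteq W''^{[j]}$ is simply false. Take $n=12$, $s=7$: then $L'' = \Lex(71)$ has row-length partition $(12,11,10,9,8,7,6,5,3)$, while $W'' = \st(x_{10}^2, x_8x_{12})$ has partition $(12,11,10,9,8,7,6,5,2,1)$. Restricting to $j=11$, the monomial $x_9x_{11}$ lies in $L''^{[11]}$ but not in $W''^{[11]}$. So monotonicity under inclusion is unavailable. Nor can you fall back on the hypothesis for those terms: $q_j(W'') = e(W''^{[j]})$ is a multiplicity in $j$ variables, and the hypothesis only concerns $n$ variables. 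Your deformation back-up is not an argument as stated; it asserts that $T$ behaves well along a chain of moves without saying why.

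The paper's proof avoids this by decomposing according to the \emph{last} step of a maximal NE-path rather than the step into row~$1$. This gives $e(W) = e(W') + e(W^\flat)$, where $W' = W\setminus\{\text{row }1\}$ and $W^\flat = W\setminus\{\text{column }n{+}1\}$. Both pieces are strongly stable in $n$ variables, of sizes $\binom{n+1}{2}-s$ and $\binom{n+1}{2}-s'$ with $s' \le s \le S$, so the hypothesis applies \emph{directly to each summand}. The same decomposition applied to $\Lex(\binom{n+2}{2}-s)$ yields $\Lex(\binom{n+1}{2}-s)$ and $\Lex(\binom{n+1}{2}-s'')$ with $s' \le s'' \le s$, and the extra slack $s'\le s''$ is absorbed by the trivial monotonicity of $e$ on nested Lex segments. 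The whole argument then collapses to two applications of the hypothesis plus one containment of Lex segments; no restriction to fewer than $n$ variables is needed.
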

\begin{proof}
 Let $L$ be the diagram of a strongly stable set in $\K[x_1, \ldots, x_{n+1}]$ of size $\binom{n+2}{2}-s$, for some $s \le S$. Let $L'$ be the diagram obtained from $L$ be removing the top row, and let $L''$ be the diagram obtained from $L$ by removing the last column. Both $L'$ and $L''$ are diagrams of strongly stable sets in $\K[x_1, \ldots, x_n]$. $L'$ is of size $\binom{n+1}{2}-s$ and $L''$ is of size $\binom{n+1}{2}-s'$ with $s'\le s$. All maximal NE-path in $L$ ending with a step up can be considered maximal NE-paths in $L'$ by removing the last step. In the same way all maximal NE-paths in $L$ ending with a step right can be considered maximal NE-paths in $L''$. It follows that $e(L)=e(L')+e(L'')$. Applying the same argument to $\Lex(\binom{n+2}{2}-s)$ we get 
\[   e(\K[\Lex\left(\tbinom{n+2}{2}-s\right)])= e(\K[\Lex \left(\tbinom{n+1}{2}-s\right)])+e(\K[\Lex\left(\tbinom{n+1}{2}-s''\right)])  \]
 with $s''\le s$. Notice that $s''\ge s'$ as the last column in the diagram of $\Lex(\binom{n+2}{2}-s)$ has at least as many boxes as the last column in $L$.   
 
 By assumption we know that $e(L') \ge e(\K[\Lex(\binom{n+1}{2}-s)])$ and $e(L') \ge e(\K[\Lex(\binom{n+1}{2}-s')])$. We now have
 \begin{align*}
 e(L) = e(L')+e(L'')& \ge e(\K[\Lex\left(\tbinom{n+1}{2}-s\right)])+ e(\K[\Lex\left(\tbinom{n+1}{2}-s'\right)])\\ & \ge e(\K[\Lex \left(\tbinom{n+1}{2}-s\right)])+e(\K[\Lex\left(\tbinom{n+1}{2}-s''\right)])  = e(\K[\Lex\left(\tbinom{n+2}{2}-s\right)])
 \end{align*}
 and thus we have proved that $e(\K[\Lex\left(\tbinom{n+2}{2}-s\right)]) \le e(L)$ for any $L$ of size $\tbinom{n+2}{2}-s$, with $s \le S$.  
\end{proof}

\begin{theorem}\label{thm:lex}
 Let $u=\binom{n+1}{2}-s$, where $n \ge 80$ and $s\le 25$. Then $\K[\Lex(u)]$ has the minimal Hilbert function, among all subalgebras of $\K[x_1, \ldots, x_n]$ generated by $u$ forms of degree two.
\end{theorem}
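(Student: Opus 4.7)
The plan is to prove the theorem by induction on $n$, with base case $n = 80$ and inductive step supplied directly by Proposition \ref{prop:lex}.

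For the base case $n = 80$, I would invoke the explicit computation recorded in Appendix \ref{app:data} (see Remark \ref{rmk:data}): it verifies that for every $u = \binom{81}{2} - s$ with $0 \le s \le 25$, the algebra $\K[\Lex(u)]$ attains the asymptotically minimal Hilbert function. For $s \in \{0, 1, 2\}$ this is automatic by Remark \ref{rmk:large_u}, since $\Lex(u)$ is then the unique strongly stable set of size $u$ in $80$ variables; for $3 \le s \le 25$ one enumerates all strongly stable sets of the required size and compares multiplicities, computed via Theorem \ref{thm:multiplicity-paths} on the competitor side and via Lemma \ref{lemma:mult_lex} for $\Lex$.

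For the inductive step, suppose the result holds in $n \ge 80$ variables for every $s \le 25$. Because $25 < 80 \le n$, the hypothesis $0 \le S < n$ of Proposition \ref{prop:lex} is satisfied with $S = 25$, and the proposition yields that $\K[\Lex(\binom{n+2}{2} - s)] \subset \K[x_1, \ldots, x_{n+1}]$ has minimal multiplicity for every $s \le 25$. To promote ``minimal multiplicity'' to ``minimal Hilbert function,'' observe that when $u = \binom{n+1}{2} - s$ with $s \le 25 < n$ one has $u > \binom{n}{2}$, so any strongly stable set of size $u$ is forced to use all $n$ variables. Every competitor $\K[W]$ therefore has Hilbert polynomial of degree exactly $n-1$, so the asymptotic ordering of Hilbert functions is governed by the leading coefficient, i.e.\ the multiplicity. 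Minimal multiplicity thus implies that $\K[\Lex(u)]$ is a minimizer of the Hilbert function in the asymptotic sense, closing the induction.

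The main obstacle is not the induction itself, which is a one-line application of Proposition \ref{prop:lex}, but the finite verification at the base case: enumerating and comparing multiplicities across all strongly stable sets of size $\binom{81}{2} - s$ for every $0 \le s \le 25$. This combinatorial workload grows rapidly with both $s$ and $n$, which is precisely why the theorem fixes the pragmatic threshold $n = 80$ — small enough for the base-case computation to remain within reach of a computer algebra system, yet large enough that $s \le 25 < n$ is comfortably in the range of Proposition \ref{prop:lex}. Any strengthening of the theorem (e.g.\ larger $s$) would require extending the base-case computation, not improving the inductive argument.
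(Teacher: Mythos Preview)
Your proposal is correct and mirrors the paper's own proof exactly: base case $n=80$ by the computation in Appendix~\ref{app:data}, inductive step by Proposition~\ref{prop:lex}. The paper's proof is a two-line sketch, while you spell out the check that $S=25<n$ and the passage from minimal multiplicity to minimal Hilbert function via the common Hilbert-polynomial degree; these additions are helpful but do not change the route.
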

\begin{proof}
 For $n=80$, see Appendix \ref{app:data}. If follows inductively from Proposition \ref{prop:lex} that it also holds for all $n>80$.  
\end{proof}

\subsection{RevLex segments}\label{sec:RevLex}
We will now study algebras generated by sets $\RevLex(u)$, where $u=\binom{n}{2}+r$ and $r$ is small. 

\begin{lemma}\label{lemma:revlex_step}
 Let $u_n=\binom{n}{2}+r$ for some fixed $r$ with $1 \le r \le n$. Suppose $\K[\RevLex(u_n)]$ has minimal multiplicity among all subalgebras of $\K[x_1, \ldots, x_n]$ generated by $u_n$ forms of degree two. If $\Lex(u_{n+1})=\st(g)$ for a monomial $g$, then either $\K[\Lex(u_{n+1})]$ or $\K[\RevLex(u_{n+1})]$ has minimal multiplicity among subalgebras of $\K[x_1, \ldots, x_{n+1}]$ generated by $u_{n+1}$ forms. If $\Lex(u_{n+1})\ne \st(g)$, then $\K[\RevLex(u_{n+1})]$ has minimal multiplicity. 
\end{lemma}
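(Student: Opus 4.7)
The plan is to mirror the decomposition strategy of Proposition \ref{prop:lex}. Given any strongly stable $L$ of size $u_{n+1}$ in $n+1$ variables (which necessarily uses $x_{n+1}$, so by strong stability the top row of $L$ is complete), I would classify maximal NE-paths in $L$ by whether they end with an up-step into row~$1$ or a right-step into column $n+1$ to obtain
\[
 e(L) = e(L') + e(L''),
\]
where $L'$ is $L$ with the top row removed (a strongly stable set of size $u_n - 1$ in $n$ variables after relabelling $x_{i+1} \mapsto y_i$), and $L''$ is $L$ with column $n+1$ removed (of size $u_{n+1} - c$ in $n$ variables, where $c$ is the number of boxes in column $n+1$ of $L$). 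Strong stability gives $c \ge r$ and forces the first $c$ rows of $L$ to be completely filled.

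The first step is to dispose of the boundary case $c = r$: then $|L''| = \binom{n+1}{2}$ pins $L''$ to the full $n$-variable diagram, and combined with column $n+1$ being exactly $\{(1,n+1),\dots,(r,n+1)\}$ this forces $L = \RevLex(u_{n+1})$ itself. So henceforth assume $L \ne \RevLex(u_{n+1})$, in which case $c \ge r + 1$ and rows $1,\ldots,r+1$ of $L$ are complete. From Lemma \ref{lemma:mult_revlex} one then reads off the identity $e(\RevLex(u_{n+1})) = 2\,e(\RevLex(u_n))$, so the target inequality becomes $e(L) \ge 2\,e(\RevLex(u_n))$ (or the weaker $e(L) \ge \min(e(\K[\Lex(u_{n+1})]),e(\K[\RevLex(u_{n+1})]))$ in the single-generator case).

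I would then apply the hypothesis twice. For $L''$: its size is at least $u_n$ whenever $c \le n$ (the generic situation), so removing Borel-maximal boxes one at a time produces a strongly stable subset $M \subseteq L''$ with $|M| = u_n$; the hypothesis together with monotonicity of $e$ under inclusion gives $e(L'') \ge e(M) \ge e(\RevLex(u_n))$. For $L'$: its size $u_n - 1$ lies just below the hypothesis's reach, so I would augment $L'$ by a Borel-minimal box $b$ of its complement in the $n$-variable diagram, obtaining a strongly stable extension of size exactly $u_n$; the hypothesis then yields $e(L' \cup \{b\}) \ge e(\RevLex(u_n))$, hence $e(L') \ge e(\RevLex(u_n)) - \Delta_b$, where $\Delta_b$ counts the maximal NE-paths in $L' \cup \{b\}$ through the added box.

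The main obstacle is controlling $\Delta_b$ against the surplus $e(L'') - e(\RevLex(u_n))$: to complete the proof one needs $\Delta_b \le e(L'') - e(\RevLex(u_n))$, which combined with the two preceding inequalities yields $e(L) \ge 2\,e(\RevLex(u_n)) = e(\RevLex(u_{n+1}))$. This is precisely the step where the dichotomy in the lemma's conclusion enters. When $\Lex(u_{n+1})$ has two strongly stable generators, the extra ``corner'' structure imposed on any candidate $L$ produces a strict surplus on the $L''$ side that dominates $\Delta_b$, forcing $\RevLex(u_{n+1})$ to be the unique minimum; when $\Lex(u_{n+1}) = \st(g)$, the value $u_{n+1}$ sits at a threshold where the two path-counting increments can align exactly, so $\Lex(u_{n+1})$ itself remains a candidate minimiser and only the weaker conclusion is available. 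Making this comparison precise using the explicit formulas of Lemmas \ref{lemma:mult_revlex} and \ref{lemma:mult_lex} is the technical heart of the argument.
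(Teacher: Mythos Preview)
Your decomposition $e(L)=e(L')+e(L'')$ via the top row and the last column is \emph{not} the one the paper uses, and the difference is exactly where your argument stalls. Removing the top row leaves you with $|L'|=u_n-1$, one box short of the inductive hypothesis; this forces you to introduce the correction term $\Delta_b$ and then to prove $\Delta_b\le e(L'')-e(\K[\RevLex(u_n)])$. You describe this as ``the technical heart of the argument'' but do not prove it, and your heuristic for why it should hold is not right: the condition on $\Lex(u_{n+1})$ says nothing about the shape of an arbitrary competitor $L$, so there is no ``extra corner structure imposed on any candidate $L$'' to exploit. As stated, this step is a genuine gap.

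The paper sidesteps the whole issue with a different decomposition. For any $L$ with \emph{more than one} strongly stable generator, remove from the \emph{last} row everything except its diagonal box, and then strip the entire diagonal. The first step only discards paths, so $e(L)\ge e(L')$; the second step gives $e(L')=2\,e(L'')$ exactly, since each maximal NE-path in $L'$ begins with either an up- or a right-step into $L''$. Because the removed boxes lie in pairwise distinct columns and never in column $n+1$ (the last row of a multi-generator $L$ cannot reach that column), at most $n$ boxes are removed, so $|L''|\ge u_{n+1}-n=u_n$. Now the hypothesis applies directly, with no deficit and no $\Delta_b$, and one gets $e(L)\ge 2\,e(\K[\RevLex(u_n)])=e(\K[\RevLex(u_{n+1})])$ by Lemma~\ref{lemma:mult_revlex}. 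The dichotomy in the statement then falls out trivially: the only strongly stable set of size $u_{n+1}$ not covered by this argument is one with a single strongly stable generator, and in degree two such a set is precisely a Lex segment; it exists if and only if $\Lex(u_{n+1})=\st(g)$.
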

\begin{proof}
 Let $L$ be the diagram of some strongly stable set of size $u_{n+1}$, which has more than one strongly stable generator. Let $L'$ be the diagram obtained by removing the boxes on the last row of $L$, except the first box, i. e. the one on the diagonal. Clearly $e(L) \ge e(L')$. Let $L''$ be the diagram obtained by removing all boxes on the diagonal of $L'$. This is also a diagram of a strongly stable set, after a shift in the row and column indices. We have $e(L')=2e(L'')$, since every maximal NE-path in $L'$ comes from adding an up och right step to the beginning to a path of $L''$. Notice that the boxes we have removed from $L$ in the two steps were all in different columns. We have not removed any box from from the last column, since $L$ had more than one strongly stable generator. This means that $|L|-|L''|\le n$, and hence $|L''| \ge u_{n+1}-n=u_n$. Let $L'''$ be a diagram obtained from $L''$ by, if necessary, removing some arbitrary boxes so that $|L'''|=u_n$. It is clearly possible to do this in such a way so that $L'''$ is still a valid diagram for a strongly stable set. By assumption $e(L''') \ge e(\K[\RevLex(u_n)]) .$ We now have
 \[
  e(L)\ge e(L')=2e(L'') \ge 2e(L''') \ge 2e(\K[\RevLex(u_n)]) = e(\K[\RevLex(u_{n+1})])
 \]
 where the last equality follows from Lemma \ref{lemma:mult_revlex}.
 
 We have now proved that the strongly stable set of size $u_{n+1}$ which gives minimal multiplicity is either $\RevLex(u_{n+1})$ or $\st(g)$ for some monomial $g$. As $\st(g)$ is a Lex segment, the proof is complete.   
\end{proof}

As we can see in Lemma \ref{lemma:revlex_step}, the situation is a bit more complicated than for the Lex-algebras in Section \ref{sec:Lex}. Lemma \ref{lemma:revlex_step} can not be used directly as an induction step, we need to analyze the situation when $\Lex(u_n)=\st(g)$ further. With $u_n=\binom{n}{2}+r$, and $r$ fixed, for which $n$ does this situation occur? The monomial $g$ has to be divisible by $x_n$, so we have $g=x_{n-k}x_n$ for some number $k$.  The monomials of degree two \emph{not} in $\st(x_{n-k}x_n)$ are the $\binom{k+1}{2}$ monomials in the variables $x_{n-k+1}, \ldots, x_n$. It follows that we can write $u_n=\binom{n+1}{2}-\binom{k+1}{2}$. Then $\binom{n}{2}+r=\binom{n+1}{2}-\binom{k+1}{2}$, and it follows that $n=\binom{k+1}{2}+r$. To summarize, $\Lex(u_n)=\st(x_{n-k}x_n)$ precisely when $n=\binom{k+1}{2}+r$.

Our next goal is to prove that if $\Lex(u_n)=\st(x_{n-k}x_n)$ for some $k$, and $e(\Lex(u_n))>e (\RevLex(u_n))$, then $e(\Lex(u_{n'}))>e (\RevLex(u_{n'}))$ for all $n'\ge n$. To do this, we first need the following technical lemma.  

\begin{lemma}\label{lemma:revlex_binomsum}
 Let $k$ and $s$ be integers, $s \ge 0$ and $k \ge 3$, and let $m=\binom{k+1}{2}+s$. If $k$ is large enough compared to $s$, so that $(k-1)(2^k-\frac{3}{2}k)-1>s$, then 
 \[
  \sum_{i=k}^m \binom{m}{i} >2^k \sum_{i=k-1}^{m-k} \binom{m-k}{i}.
 \]
\end{lemma}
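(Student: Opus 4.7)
The plan is to use Vandermonde's identity to rewrite both sides of the inequality in terms of the same family of binomial coefficients $\binom{m-k}{\ell}$, then reduce the problem to a single dominant-term estimate and verify that the stated hypothesis on $s$ is exactly what is needed to close the gap.

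First I would write $\binom{m}{i} = \sum_{j=0}^{k}\binom{k}{j}\binom{m-k}{i-j}$ and swap the order of summation in the left-hand side. Writing $T = \sum_{i=k-1}^{m-k}\binom{m-k}{i}$ for the factor appearing on the right and substituting $\ell = i - j$, one gets
\[
\sum_{i=k}^{m}\binom{m}{i} \;=\; \sum_{j=0}^{k}\binom{k}{j}\sum_{\ell=k-j}^{m-k}\binom{m-k}{\ell}.
\]
The $j=1$ summand is exactly $kT$, and subtracting $2^{k}T = \sum_{j}\binom{k}{j}T$ kills this term. A second interchange of summations, applied carefully to the $j\geq 2$ part, should then yield the clean identity
\[
\sum_{i=k}^{m}\binom{m}{i} - 2^{k}T \;=\; -\binom{m-k}{k-1} \;+\; \sum_{\ell=0}^{k-2}\binom{m-k}{\ell}\sum_{j=0}^{\ell}\binom{k}{j},
\]
where the $-\binom{m-k}{k-1}$ comes from the $j=0$ summand.

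Next, I would lower-bound the positive sum by retaining only the $\ell = k-2$ term, whose inner binomial sum equals $2^{k}-k-1$, and discarding the other nonnegative terms. This reduces the target inequality to
\[
(2^{k}-k-1)\binom{m-k}{k-2} \;>\; \binom{m-k}{k-1}.
\]
Using $\binom{m-k}{k-1}/\binom{m-k}{k-2} = (m-2k+2)/(k-1)$ and substituting $m = \binom{k+1}{2}+s$, so that $m - 2k + 2 = \tfrac{1}{2}(k^{2}-3k+4) + s$, the inequality becomes $2(k-1)(2^{k}-k-1) > k^{2}-3k+4 + 2s$. A short rearrangement turns the right-hand side into $s < (k-1)(2^{k}-\tfrac{3}{2}k) - 1$, matching the stated hypothesis exactly.

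The main obstacle is the combinatorial bookkeeping: the second swap of summations has to be executed precisely so that the inner sum collapses to $\sum_{j=0}^{\ell}\binom{k}{j}$, and one must verify that the single-term lower bound at $\ell=k-2$ is still strong enough. The coefficients in the hypothesis work out with \emph{equality}, which indicates that one cannot afford to lose even a constant factor in the dominant-term estimate, so the discarded terms must genuinely not be needed. The assumption $k\ge 3$ enters because for $k=2$ the bound $(k-1)(2^{k}-\tfrac{3}{2}k)-1 = 0$ excludes every admissible $s\ge 0$ and the statement is vacuous.
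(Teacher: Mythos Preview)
Your argument is correct. The Vandermonde expansion and the two interchanges of summation are carried out accurately, your identity
\[
\sum_{i=k}^{m}\binom{m}{i} - 2^{k}T \;=\; -\binom{m-k}{k-1} + \sum_{\ell=0}^{k-2}\binom{m-k}{\ell}\sum_{j=0}^{\ell}\binom{k}{j}
\]
checks out, and keeping only the $\ell=k-2$ term reduces the claim to $(2^{k}-k-1)\binom{m-k}{k-2}>\binom{m-k}{k-1}$, which is exactly equivalent to the hypothesis $(k-1)(2^{k}-\tfrac{3}{2}k)-1>s$. The needed range conditions ($m-k\ge k-1$, etc.) follow from $k\ge 3$.

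The paper arrives at the very same final inequality, but by a different mechanism: instead of Vandermonde, it introduces the auxiliary function
\[
F(t)=2^{t}\!\!\sum_{i=k}^{m-t}\binom{m-t}{i}+(2^{t}-1)\binom{m-t}{k-1}+(2^{t}-t-1)\binom{m-t}{k-2},
\]
shows $F(t)\ge F(t+1)$ by a single application of Pascal's rule (discarding a $\binom{m-t-1}{k-3}$ term at each step), and then compares $F(k)$ with $2^{k}T$. So the paper performs $k$ one-step reductions with a small loss at each step, whereas you do the whole reduction in one Vandermonde identity and postpone all the loss to the final single-term estimate. Your route is more direct and produces an exact identity before any estimation; the paper's $F(t)$ argument is slightly more hands-on but avoids the bookkeeping of the double sum-swap. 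Both approaches converge to the identical numerical condition, which is why the hypothesis in the lemma is sharp for either proof.
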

\begin{proof}
 Define
 \[
  F(t)=2^t \sum_{i=k}^{m-t}\binom{m-t}{i}+(2^t-1)\binom{m-t}{k-1}+(2^t-t-1)\binom{m-t}{k-2}
 \]
 for $t \le m-k = \binom{k}{2}+s$. Note that $F(k)$ is defined, since $k \le \binom{k}{2}+s$ holds for all $k \ge 3$. The idea of the proof is to show that the two inequalities
 \begin{equation}\label{ineq1:lemma_binomsum}
   \sum_{i=k}^m \binom{m}{i} \ge F(k) >2^k \sum_{i=k-1}^{m-k} \binom{m-k}{i}
 \end{equation}
hold. As $F(0)= \sum_{i=k}^m \binom{m}{i}  $, the first inequality is $F(0) \ge F(k)$. We will prove this by showing that $F(t) \ge F(t+1)$. Recall that 
\[
 \binom{m-t}{i} = \binom{m-(t+1)}{i} + \binom{m-(t+1)}{i-1}
\]
for $0 <i<m-t$. We get
\begin{align*}
 \sum_{i=k}^{m-t} \binom{m-t}{i}&= \sum_{i=k}^{m-(t+1)} \binom{m-t}{i} +1 \\
 &=  \sum_{i=k}^{m-(t+1)}\left(\binom{m-(t+1)}{i} + \binom{m-(t+1)}{i-1}\right) +\binom{m-(t+1)}{m-(t+1)} \\
 &=  \sum_{i=k}^{m-(t+1)}\binom{m-(t+1)}{i}+  \sum_{i=k-1}^{m-(t+1)}\binom{m-(t+1)}{i}\\
 &= 2\sum_{i=k}^{m-(t+1)}\binom{m-(t+1)}{i} + \binom{m-(t+1)}{k-1} .
\end{align*}
From this we obtain
\begin{align*}
 F(t)=&2^t \sum_{i=k}^{m-t}\binom{m-t}{i}+(2^t-1)\binom{m-t}{k-1}+(2^t-t-1)\binom{m-t}{k-2}\\
 =&2^{t+1} \sum_{i=k}^{m-(t+1)}\binom{m-(t+1)}{i}+2^t\binom{m-(t+1)}{k-1}+  \\
 & \quad + (2^t-1)\left(\binom{m-(t+1)}{k-1}+\binom{m-(t+1)}{k-2}\right)  + \\
 & \quad +(2^t-t-1)\left(\binom{m-(t+1)}{k-2}+\binom{m-(t+1)}{k-3}\right) \\
 \ge &  2^{t+1} \sum_{i=k}^{m-(t+1)}\!\!\! \binom{m-(t+1)}{i} + (2^{t+1}-1)\binom{m-(t+1)}{k-1} + (2^{t+1}-(t+1)-1)\binom{m-(t+1)}{k-2} \\
 =&F(t+1).
\end{align*}
We have now proved the first inequality of (\ref{ineq1:lemma_binomsum}). If 
\begin{equation}\label{ineq2:lemma_binomsum}
 2^k-k-1>\frac{m-2k+2}{k-1}
\end{equation}
 it follows that
\begin{align*}
 F(k) =& 2^k \sum_{i=k}^{m-k}\binom{m-k}{i}+(2^k-1)\binom{m-k}{k-1}+(2^k-k-1)\binom{m-k}{k-2}\\
 >& 2^k \sum_{i=k}^{m-k}\binom{m-k}{i}+(2^k-1)\binom{m-k}{k-1}+  \frac{m-2k+2}{k-1} \binom{m-k}{k-2}\\
 =& 2^k \sum_{i=k}^{m-k}\binom{m-k}{i}+(2^k-1)\binom{m-k}{k-1}+   \binom{m-k}{k-1} = 2^k \sum_{i=k-1}^{m-k} \binom{m-k}{i},
\end{align*}
which is the second inequality of (\ref{ineq1:lemma_binomsum}). Hence we need to verify (\ref{ineq2:lemma_binomsum}).
Since $m=\binom{k+1}{2}+s=\frac{k(k+1)}{2}+s$, (\ref{ineq2:lemma_binomsum}) is equivalent to 
\[
 2^k>\frac{\frac{k(k+1)}{2}+s-2k+2 }{k-1}+k+1,
\]
and the right hand side simplifies to $\frac{3}{2}k+\frac{s+1}{k-1}$. Hence
\[
 (\ref{ineq2:lemma_binomsum}) \ \iff \ 2^k>\frac{3}{2}k+\frac{s+1}{k-1} \ \iff \ (k-1)(2^k-\frac{3}{2}k)-1>s,
\]
which is true by assumption. We have now proved both inequalities of (\ref{ineq1:lemma_binomsum}). 
\end{proof}

\begin{lemma}\label{lemma:revlex_final}
Let $k_0$ and $r$ be integers such that $k_0 \ge 3$, $r \ge 1$, and $k_0$ large enough compared to $r$ so that $(k_0-1)(2^{k_0}-\frac{3}{2}k_0)>r$. Let $u_n=\binom{n}{2}+r$. If $\K[\RevLex(u_n)]$ has minimal multiplicity among all subalgebras of $\K[x_1, \ldots, x_n]$ generated by $u_n$ forms of degree two, when $n=\binom{k_0}{2}+r$, then the same holds for all $n \ge \binom{k_0}{2}+r$. 
\end{lemma}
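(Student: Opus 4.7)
I would prove the result by induction on $n \ge n_0 := \binom{k_0}{2}+r$; the base case $n = n_0$ is the stated hypothesis. For the inductive step $n \to n+1$, I invoke Lemma \ref{lemma:revlex_step}: whenever $\Lex(u_{n+1})$ has more than one strongly stable generator, that lemma already gives minimality of $\K[\RevLex(u_{n+1})]$. The only remaining case is $\Lex(u_{n+1}) = \st(g)$, which by the discussion preceding Lemma \ref{lemma:revlex_binomsum} forces $n+1 = \binom{k+1}{2}+r$ for some $k$, and the condition $n+1 > n_0$ yields $k \ge k_0$. Lemma \ref{lemma:revlex_step} then reduces the task to verifying $e(\K[\Lex(u_{n+1})]) \ge e(\K[\RevLex(u_{n+1})])$ directly.

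Write $n_k := \binom{k+1}{2}+r$, so that $n_0 = n_{k_0-1}$ in this notation. The key point is that both multiplicities satisfy a clean recursion in $k$. On the RevLex side, Lemma \ref{lemma:mult_revlex} combined with $\binom{k+1}{2} = \binom{k}{2}+k$ yields, in one line, the exact identity $e(\K[\RevLex(u_{n_k})]) = 2^k\, e(\K[\RevLex(u_{n_{k-1}})])$. On the Lex side, Lemma \ref{lemma:mult_lex} gives $e(\K[\Lex(u_{n_k})]) = \sum_{i=k}^{n_k-1}\binom{n_k-1}{i}$; setting $m = n_k - 1$ and $s = r-1$ (so $m = \binom{k+1}{2}+s$), Lemma \ref{lemma:revlex_binomsum} applies, since its numerical hypothesis $(k-1)(2^k-\tfrac{3}{2}k)-1 > s$ is equivalent to $(k-1)(2^k-\tfrac{3}{2}k) > r$, which follows from the assumption at $k_0$ together with the monotonicity of $k \mapsto (k-1)(2^k-\tfrac{3}{2}k)$ for $k \ge 3$. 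The substitution $m-k = \binom{k}{2}+r-1 = n_{k-1}-1$ turns the right-hand side of Lemma \ref{lemma:revlex_binomsum} into exactly $2^k\, e(\K[\Lex(u_{n_{k-1}})])$, so the lemma reads $e(\K[\Lex(u_{n_k})]) > 2^k\, e(\K[\Lex(u_{n_{k-1}})])$.

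Combining the two recursions, if the secondary inductive hypothesis $e(\K[\Lex(u_{n_{k-1}})]) \ge e(\K[\RevLex(u_{n_{k-1}})])$ holds, then
\[
e(\K[\Lex(u_{n_k})]) > 2^k\, e(\K[\Lex(u_{n_{k-1}})]) \ge 2^k\, e(\K[\RevLex(u_{n_{k-1}})]) = e(\K[\RevLex(u_{n_k})]).
\]
A secondary induction on $k \ge k_0 - 1$, with base case $n_{k_0-1} = n_0$ supplied by the main hypothesis, therefore establishes the required inequality at every special $n_k$, and the primary induction on $n$ closes. The main obstacle is the index-matching calculation that identifies the right-hand side of Lemma \ref{lemma:revlex_binomsum} with $2^k\, e(\K[\Lex(u_{n_{k-1}})])$; once this matching is in place, the Lex and RevLex sequences recurse in lockstep and the whole argument collapses to a single clean induction step.
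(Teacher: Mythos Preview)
Your proposal is correct and follows essentially the same route as the paper. Both arguments reduce, via Lemma \ref{lemma:revlex_step}, to comparing $e(\K[\Lex(u_{n_k})])$ with $e(\K[\RevLex(u_{n_k})])$ at the special values $n_k=\binom{k+1}{2}+r$, and both prove this by induction on $k$ using the identity $e(\K[\RevLex(u_{n_k})])=2^k e(\K[\RevLex(u_{n_{k-1}})])$ together with Lemma \ref{lemma:revlex_binomsum}; your chain $e(\Lex_{n_k})>2^k e(\Lex_{n_{k-1}})\ge 2^k e(\RevLex_{n_{k-1}})=e(\RevLex_{n_k})$ is exactly the paper's chain read in reverse.
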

\begin{proof}
 If we can prove $e(\K[\RevLex(u_n)])<e(\K[\Lex(u_n)])$ for all $n \ge \binom{k_0}{2}+r$ such that $\Lex(u_n)$ has only one strongly stable generator, then we are done by Lemma \ref{lemma:revlex_step}. That is, we want to prove
 \begin{equation}\label{eq:lemma_revlex_final}
  e(\K[\RevLex(u_n)])<e(\K[\Lex(u_n)]) \ \ \mbox{for all} \ \ n=\binom{k}{2}+r, \ \ k \ge k_0.
 \end{equation}
This is true for $k=k_0$, by assumption. The proof proceeds by induction. We assume that (\ref{eq:lemma_revlex_final}) is true for some $k \ge k_0$, and we want to prove it for $k+1$. Let $n=\binom{k+1}{2}+r$, and notice that $\binom{k}{2}+r=n-k$. Applying Lemma \ref{lemma:mult_revlex} and Lemma \ref{lemma:mult_lex} for the multiplicities, we are assuming that
\[
 2^{n-k-1}-2^{n-k-r-1}<\sum_{i=k-1}^{n-k-1}\binom{n-k-1}{i},
\]
and we want to prove
\[
 2^{n-1}-2^{n-r-1}< \sum_{i=k}^{n-1}\binom{n-1}{i}.
\]
By the inductive hypothesis we get
\[
 2^{n-1}-2^{n-r-1}=2^k(2^{n-k-1}-2^{n-k-r-1})<2^k\sum_{i=k-1}^{n-k-1}\binom{n-k-1}{i}.
\]
As $(k-1)(2^{k}-\frac{3}{2}k)>r$ holds for any $k \ge k_0$ by the assumption on $k_0$, we can apply Lemma \ref{lemma:revlex_binomsum} with $m=n-1$ and $s=r-1$. This gives 
\[
 2^k\sum_{i=k-1}^{n-k-1}\binom{n-k-1}{i}<\sum_{i=k}^{n-1}\binom{n-1}{i}
\]
and we are done. 
\end{proof}

Finally we can apply Lemma \ref{lemma:revlex_final} to get a class of minimal RevLex-algebras not included in the table in Appendix \ref{app:data}. 

\begin{theorem}\label{thm:revlex}
 Let $u_n=\binom{n}{2}+r$ for $n \ge 80$ and $1 \le r \le 50$. Then $\K[\RevLex(u_n)]$ has the minimal Hilbert function among the subalgebras of $\K[x_1, \ldots, x_n]$ generated by $u_n$ forms of degree two. 
\end{theorem}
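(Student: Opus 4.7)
The plan is to deduce Theorem \ref{thm:revlex} directly from Lemma \ref{lemma:revlex_final}, with the computational data in Appendix \ref{app:data} supplying the required base cases.

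First I would choose a single $k_0$ that handles every $r$ in the interval $1 \le r \le 50$ at once. The hypothesis of Lemma \ref{lemma:revlex_final} is $(k_0-1)(2^{k_0}-\tfrac{3}{2}k_0) > r$; taking $k_0 = 5$ gives $4\cdot(32-\tfrac{15}{2}) = 98$, which exceeds $50$, so this one choice works uniformly. The corresponding base value is then $n_0 := \binom{5}{2}+r = 10+r$, which ranges over $[11,60]$.

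Second, since every such $n_0$ satisfies $n_0 \le 80$, I would read off from the table in Appendix \ref{app:data} that $\K[\RevLex(u_{n_0})]$ does realize the minimal multiplicity among all strongly stable subalgebras of $\K[x_1,\ldots,x_{n_0}]$ generated by $u_{n_0}$ quadratic forms. Lemma \ref{lemma:revlex_final} then extends this to every $n \ge n_0 = 10+r$, and in particular to every $n \ge 80$, which is the range asserted in the theorem.

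Finally I would record why minimal multiplicity is what the theorem is really after: because $u_n > \binom{n}{2}$, any strongly stable set of size $u_n$ must involve all $n$ variables, so its Hilbert polynomial has degree exactly $n-1$ in every case, and the asymptotic comparison of Hilbert functions reduces to the comparison of leading coefficients, i.e.\ of multiplicities. The substantive work has already been carried out inside Lemma \ref{lemma:revlex_final} (and ultimately inside the binomial estimate of Lemma \ref{lemma:revlex_binomsum}); what remains for this theorem is only the bookkeeping verification just described. The main obstacle, such as it is, is ensuring the base cases actually lie in the precomputed range, and this is precisely why the hypothesis $n \ge 80$ appears in the statement: it is exactly the slack that allows $k_0=5$ and $r \le 50$ to feed into Lemma \ref{lemma:revlex_final} with a base $n_0 = 10+r \le 60$ already covered by the computation.
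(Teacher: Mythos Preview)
Your strategy---invoke Lemma \ref{lemma:revlex_final} once a uniform $k_0$ and the appendix data supply the base cases---is exactly the paper's strategy, but the particular choice $k_0=5$ does not work, and this is a genuine gap, not a bookkeeping slip.

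With $k_0=5$ the base case for each $r$ is $n_0=\binom{5}{2}+r=10+r$, so in every instance $s=n_0-r=10$. But $s=10$ lies in the region where \emph{Lex}, not RevLex, has the smaller multiplicity (compare Theorem \ref{thm:lex}, which asserts Lex is minimal for $s\le 25$ once $n\ge 80$). One can check this directly from Lemmas \ref{lemma:mult_revlex} and \ref{lemma:mult_lex}: for $r=6$, $n_0=16$, and $s=10=\binom{5}{2}$ we have $\Lex(u_{16})=\st(x_{12}x_{16})$ with $k=4$, and
\[
e(\K[\Lex(u_{16})])=\sum_{i=4}^{15}\binom{15}{i}=2^{15}-576=32192
\quad<\quad
2^{15}-2^{9}=32256=e(\K[\RevLex(u_{16})]).
\]
So the table in Appendix \ref{app:data} will \emph{not} show RevLex minimal at $(n_0,r)=(16,6)$, and the hypothesis of Lemma \ref{lemma:revlex_final} fails there. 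The same phenomenon persists for all $r\ge 6$: since $s=10$ is fixed while $n_0$ grows, the gap $e(\K[\RevLex])-e(\K[\Lex])=\sum_{i=0}^{3}\binom{n_0-1}{i}-2^{9}$ only increases. Your argument therefore covers at most $1\le r\le 5$.

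This is precisely why the paper takes $k_0=9$: the base cases then sit at $s=\binom{9}{2}=36$, which (as the table confirms) lies in the RevLex region for every $n\in[37,80]$. The cost of the larger $k_0$ is that for $45\le r\le 50$ the base $n_0=36+r$ exceeds $80$; the paper handles these five values separately, using Lemma \ref{lemma:revlex_step} to bridge from $n=80$ up to $n_0=36+r$ and then a direct comparison of $e(\K[\Lex(u_{n_0})])$ with $e(\K[\RevLex(u_{n_0})])$ at that single value before invoking Lemma \ref{lemma:revlex_final}. Your explanation of the role of the bound $n\ge 80$ is accordingly also off: it is the upper edge of the computed range needed for $k_0=9$, not slack left over from $k_0=5$.
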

\begin{proof}
 We will use Lemma \ref{lemma:revlex_final} with $k_0=9$. As $(k_0-1)(2^{k_0}-\frac{3}{2}k_0-1)=3988>50$ Lemma \ref{lemma:revlex_final} can indeed be applied for $1 \le r \le 50$, but let us first consider $1 \le r \le 44$. In the table in Appendix \ref{app:data} we see that $\K[\RevLex(u_n)]$  gives the minimal multiplicity for all $n=\binom{9}{2}+r=36+r$, i.\,e. $37 \le n \le 80$. By Lemma \ref{lemma:revlex_final} $\K[\RevLex(u_n)]$ will have the minimal multiplicity for all $n \ge 80$. 
 
 Next, let us consider $45 \le r \le 50$. It follows from Appendix \ref{app:data} and Lemma \ref{lemma:revlex_step}, with $n=80$, that $\RevLex(u_n)$ gives the minimal Hilbert function for $80 \le n <36+r$, as $n=36+r$ is the least $n>80$ for which $\Lex(u_n)=\st(g)$ for some monomial $g$. For $n=36+r$, Lemma \ref{lemma:revlex_step} only tells us that Lex or RevLex gives the minimal Hilbert function. Using Lemma \ref{lemma:mult_revlex} and Lemma \ref{lemma:mult_lex} we can compute $e(\K[\Lex(u_n)])$ and $e(\K[\RevLex(u_n)])$ for all $n=36+r$ with $45 \le r \le 50$, and verify that $e(\K[\RevLex(u_n)])$ has the smaller value. By Lemma \ref{lemma:revlex_final}, this holds also for any $n>36+r$, and we are done.    
\end{proof}

Theorem \ref{thm:lex} and Theorem \ref{thm:revlex} together with the data in Appendix \ref{app:data} now proves Theorem \ref{thm:main}.

\section{Concluding remarks}\label{sec:last}

A next step would be to look for a generalization of Conjecture \ref{conj} to higher degrees. Example 3.5 in \cite{Boij-Conca} shows that the minimal Hilbert function for a subalgebra of $\K[x_1,x_2,x_3]$ generated by 12 forms of degree five is not given by the Lex or RevLex segment. Hence, Conjecture \ref{conj} does not generalize directly to higher degrees, one needs to use other monomial orderings. In fact, this can be observed already in degree three. 

\begin{ex}\label{ex:deg3.1}
 For $n=4$, $d=3$, and $u=13$ there are eight strongly stable sets, namely
 \begin{align*}
W_1&=\st({x}_{1}{x}_{3}{x}_{4},\,{x}_{3}^{3}),& \ W_2&=\st({x}_{2}^{2}{x}_{4},\,{x}_{3}^{3}),\\
W_3&=\st({x}_{1}{x}_{4}^{2},\,{x}_{2}^{2}{x}_{4}),& \ W_4&=\st({x}_{1}{x}_{3}{x}_{4},\,{x}_{2}^{2}{x}_{4},\,{x}_{2}{x}_{3}^{2}),\\
W_5&=\st({x}_{1}{x}_{3}^{2},\,{x}_{2}^{2}{x}_{4},\,{x}_{3}^{3}),& \ W_6&=\st({x}_{1}{x}_{2}{x}_{4},\,{x}_{1}{x}_{3}^{2},\,{x}_{2}^{2}{x}_{4},\,{x}_{3}^{3}),\\
W_7&=\st({x}_{1}^{2}{x}_{4},\,{x}_{1}{x}_{3}^{2},\,{x}_{2}^{2}{x}_{4},\,{x}_{3}^{3}),& \ W_8&=\st({x}_{1}{x}_{4}^{2},\,{x}_{2}{x}_{3}^{2}).\\
 \end{align*}
These sets are generated using Macaulay2. Here $W_2$ is the RevLex segment, and $W_3$ the Lex segment. The multiplicities are $e([W_1])=13$, $e([W_2])= \ldots = e(\K[W_7])=15$, and $e(\K[W_8])=16$, so $\K[W_1]$ has the minimal Hilbert function. 
\end{ex}

It is not obvious which monomial ordering(s) that has $W_1$ in Example \ref{ex:deg3.1} as an initial segment. Another approach would be to look for a combinatorial description of the strongly stable sets that gives minimal multiplicity.

\begin{questions}
\ 
 \begin{itemize}
  \item Which monomial orderings define subalgebras with minimal Hilbert function? 
  \item Is there a combinatorial classification of the strongly stable sets giving minimal Hilbert function (not necessarily referring to monomial orderings)? 
 \end{itemize}
 
 One may also consider the questions 1 and 3 in \cite[Questions 3.6]{Boij-Conca}, mentioned in the introduction, again for $d \ge 3$. Does examples such as Example \ref{ex:persist_counterex}, where the Hilbert function is minimal in the asymptotic sense but not minimal for small arguments, exist also in higher degrees? The following example, with $d=3$, shows that a minimal value of the Hilbert function in $i=2$ does not imply minimal Hilbert function for all $i$. That is, the answer to question 3 is negative, also in degree three.  
 
 \begin{ex}
  For $n=6$, $d=3$, and $u=43$ there are 672 strongly stable sets. The sets were generated using Macaulay2. Among the algebras generated by those sets, the minimal multiplicity is $176$, and this is attained only by the set $W_1=\st(x_3x_5x_6)$. The minimal value of $\HF(A,2)$, among the 672 algebras, is 343, and is attained by both $W_1$ and $W_2=\st(x_2x_5x_6, x_4x_5^2)$. For $i>2$ we have $\HF(\K[W_1],i)<\HF(\K[W_2,i])$. 
  
  We may also remark that neither $W_1$ nor $W_2$ is a Lex or RevLex segment, as the Lex segment is  $\st(x_3x_4x_6,x_2x_6^2)$, and the RevLex segment is $\st(x_2x_4x_6,x_3^2x_6,x_5^3)$.  \end{ex}

\end{questions}

\subsection*{Acknowledgement}
First of all, I would like to thank Aldo Conca for pointing out the important connection between the multiplicity and the maximal NE-paths, and Per Alexandersson for suggesting the method of computation in Appendix \ref{app:data}. I would also like to thank Ralf Fröberg, Christian Gottlieb, and Samuel Lundqvist for our many discussions around the topics of this paper. Finally, I thank the anonymous referees for their careful reading and valuable comments.

% \pagebreak 
\bibliographystyle{plain}
\bibliography{paper}
\pagebreak

 \appendix
 
 \section{Data for $n \le 80$}\label{app:data}
 In the table on the next page the monomial orderings giving the algebras on $u=\binom{n}{2}+r$ generators with minimal Hilbert function are given, for $n \le 80$ and $1 \le r \le n-3$. For $n-2 \le r \le n$ there is only one strongly stable set, as we saw in Remark \ref{rmk:large_u}. 
 
 The data for the table is based only on a computation of the multiplicities, except for three cases where more information was needed, see the discussion after Remark \ref{rmk:data}. The multiplicities are computed recursively, in the following way. We let the strongly stable sets be represented by diagrams, as before. To each box on the diagonal we also associate the number of maximal NE-paths starting in that box. The multiplicity is the sum of those numbers. Suppose that we have all diagrams, including the numbers in the diagonal boxes, of strongly stable sets with precisely $n$ columns and size greater than $\binom{n}{2}$. The following steps generate the diagrams with precisely $n+1$ columns and size greater than $\binom{n+1}{2}$. The procedure is also illustrated in Figure \ref{fig:algorithm}.
 
 \begin{enumerate}
  \item To each diagram, add one box to the left in each row. This gives the diagram a new diagonal, to which we shall associate numbers. The box in the first row is given the number 1. To the other boxes, assign the sum of the number above and the number to the right.   
  
  \item For each diagram constructed in step 1, construct a new diagram by adding a new row with one box. This box is assigned the same number as the box right above. 
  
  \item Take all diagrams produced in step 1 and 2, and discard those of size less than or equal to $\binom{n+1}{2}$.
 \end{enumerate}
 
 \begin{figure}[ht]
 \begin{minipage}{0.25\textwidth}
  \includegraphics[scale=0.7]{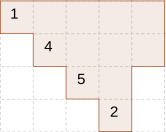}
   \end{minipage} \scalebox{1.5}{$\dashrightarrow$} \begin{minipage}{0.3\textwidth}
  \hspace{0.05\textwidth}\includegraphics[scale=0.7]{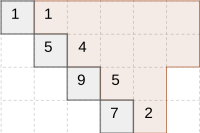}
 \end{minipage} \begin{minipage}{0.3\textwidth}
  \includegraphics[scale=0.7]{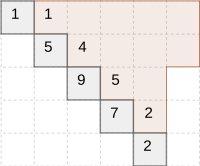}
 \end{minipage}\caption{A diagram of five columns generates two diagrams of six columns.}\label{fig:algorithm}
 \end{figure}

 The number associated to a box on the new diagonal indeed gives the number of maximal NE-paths, as each path has to start with either a step up or right. Let $L$ be an arbitrary diagram with $n+1$ columns and size greater than $\binom{n+1}{2}$. Let $L'$ be the diagram with $n$ columns obtained by removing the diagonal from $L$. Then step 1 or step 2 above applied to $L'$ will produce $L$, but we need to verify that $L'$ has size greater than $\binom{n}{2}$. If the diagonal of $L$ has at most $n$ boxes, $L'$ has size greater than $\binom{n+1}{2}-n=\binom{n}{2}$. If the diagonal of $L$ has $n+1$ boxes it means that $L$ is the largest possible diagram with $n+1$ columns, which has size $\binom{n+2}{2}$. Then $L'$ has size $\binom{n+1}{2}-(n+1)=\binom{n+1}{2}$.   
 
 Starting from the single strongly stable set $\{x_1^2\}$ on one variable we can produce all strongly stable sets on $n$ variables of size greater than $\binom{n}{2}$, for any given $n$.  
 
 To implement the algorithm, each strongly stable set can be represented by a vector containing the number of boxes in each row of the diagram.

 \includepdf[pages=-]{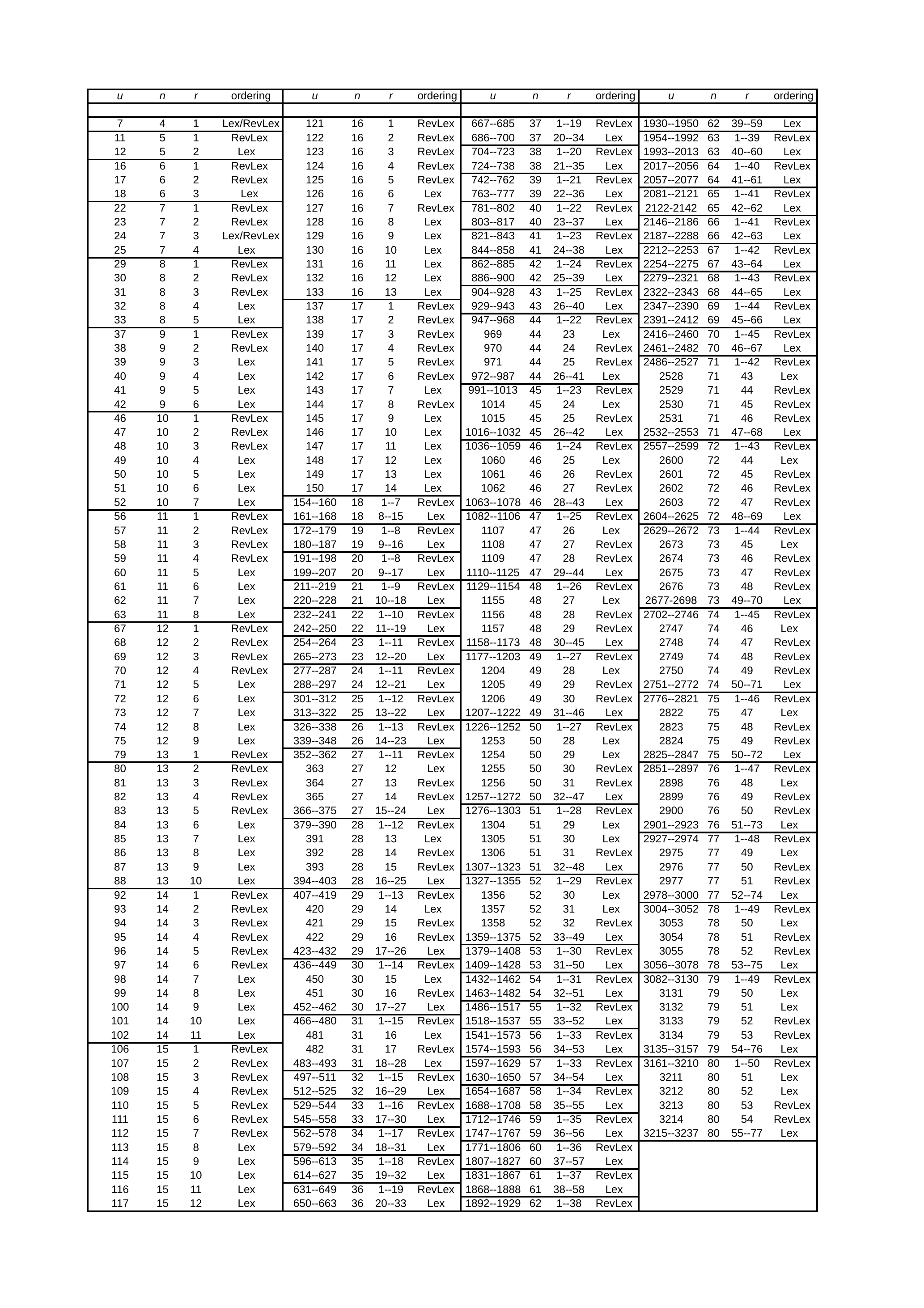}

\section{A combinatorial analogue of Conjecture \ref{conj}}\label{app:comb_conj}
Each strongly stable set $W$ of degree two monomials in $\K[x_1, \ldots,
x_n]$ corresponds to the integer partition $\lambda=(\lambda_1, \ldots,
\lambda_n)$ of $u=|W|$ defined by
\[ \lambda_i=|\{x_ix_j \in W \ | \ j\ge i\}| = \ \mbox{the length of row }
i \ \mbox{in} \ L\]
where $L$ is the diagram representing $W$. The partition $\lambda$ will have \emph{distinct parts} in the sense that $\lambda_i\ge \lambda_{i+1}$ with equality only when $\lambda_i=\lambda_{i+1}=0$. The diagram $L$ is also called the \emph{shifted Ferrers diagram} of the partition $\lambda$. We assume that $ \binom{n}{2} <u \le
\binom{n+1}{2}$ as before, meaning that the diagram has precisely $n$ columns, or equivalently that $\lambda_1=n$.
The set $\RevLex(u)$ corresponds to the partition $ (n, \ldots, \hat{i} \ldots, 1)$ meaning that we list all integers between $n$ and $1$, except $i$, where $i$ is chosen uniquely so that the parts add up to $u$. For example, the set $\RevLex(71)$ displayed in Figure \ref{fig:ex_persist1} corresponds to the partition $(12, \ldots, \hat{7}, \dots 1)=(12, 11, 10,9, 8,6,5, 4,3,2, 1)$. The set $\Lex(u)$ corresponds to the partition $(n,n-1, \ldots, j, k)$ where again $j$ and $k$ are chosen uniquely so that the sum is $u$, and with the condition that $j>k$. For example the set $\Lex(71)$ in Figure \ref{fig:ex_persist5} gives $(12,11,10,9,8,7,6,5,3)$. 

\begin{figure}[ht]
\begin{center}
\includegraphics[scale=0.4]{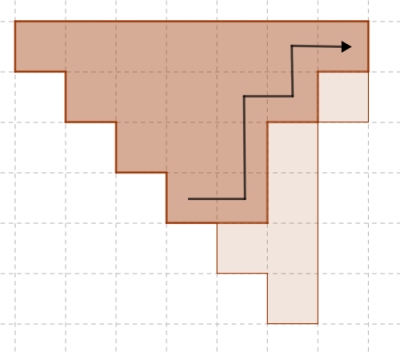}
 \caption{A maximal NE-path defines a subdiagram.}
 \label{fig:subdiagram}
\end{center}
\end{figure}

If we fix $n$, the set $W$ is uniquely determined by the partition $\lambda'=(\lambda_2, \ldots, \lambda_n)$, as $\lambda_1=n$. This is a partition of $v=u-n$, and $\binom{n-1}{2} \le v \le \binom{n}{2}$. A maximal NE-path in the diagram $L$ defines a subdiagram by taking the boxes on the path, and those to the left of it, as in Figure \ref{fig:subdiagram}. This, in turn, gives a subpartition $\mu \subseteq \lambda$ with distinct parts, i.\,e.\ $\mu=(\mu_1, \ldots, \mu_n)$ with $\mu_i \le \lambda_i$. As we will always have $\mu_1=n$, it is enough to consider $\mu'=(\mu_2, \ldots, \mu_n) \subseteq \lambda'$. In this way we have a bijection between the subpartitions $\mu'$ with distinct parts, and the maximal NE-paths of $L$. Conjecture \ref{conj} can now be stated as follows.

\begin{conjecture}\label{conj_comb}
 For fixed positive integers $N$ and $v$ such that $\binom{N}{2}\le v\le \binom{N+1}{2}$ let $\mathcal{P}$ be the set of integer partition of $v$ into distinct parts, with largest part at most $N$. The member of $\mathcal{P}$ that has the minimal number of subpartitions with distinct parts is
 \[(N, \ldots, \hat{i}, \ldots, 1)    \ \mbox{or} \  (N, N-1, \ldots, j,k) \ \mbox{with} \ j>k.\]
\end{conjecture}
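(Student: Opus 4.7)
The plan is to extend the inductive techniques of Section \ref{sec:Lex} and Section \ref{sec:RevLex} to a unified argument covering all $r \in \{1, \ldots, N+1\}$ simultaneously. The central identity is the one used in the proof of Proposition \ref{prop:lex}: for any strongly stable diagram $L$ with $N+1$ columns, splitting maximal NE-paths by whether the final step is up or right gives
\[
  e(L) = e(L') + e(L''),
\]
where $L'$ is $L$ with its top row removed and $L''$ is $L$ with its last column removed, both strongly stable in $N$ columns. By the bijection between maximal NE-paths and subpartitions with distinct parts, this translates to an additive recursion for $f(\lambda)$ in Conjecture \ref{conj_comb}, so I would phrase the whole argument at the level of paths.

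Write $m(u) = \min\bigl(e(\K[\Lex(u)]), e(\K[\RevLex(u)])\bigr)$. The strong inductive hypothesis asserts the conjecture for all $N' < N$. Applied to $L'$ and $L''$, it gives $e(L) \ge m(|L'|) + m(|L''|)$, and the inductive step reduces to the purely arithmetic inequality
\[
  m(|L'|) + m(|L''|) \ge m(|L|),
\]
to be verified over all admissible pairs $(|L'|, |L''|)$ coming from strongly stable $L$. The admissible pairs are constrained: $|L'|$ is $|L|$ minus the top-row length (at least $N+1$), and $|L''|$ is $|L|$ minus the last-column length, which is at least $1$ and at most $N+1$.

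To tackle this, I would partition the range $1 \le r \le N+1$ into three zones: a \emph{RevLex zone} (small $r$), a \emph{Lex zone} (large $r$), and a \emph{transition zone} in the middle where the minimizer switches. In the RevLex zone one uses Lemma \ref{lemma:mult_revlex} together with a refinement of Lemma \ref{lemma:revlex_binomsum} — pushing the computable base case in Appendix \ref{app:data} further by a direct binomial estimate, and invoking Lemma \ref{lemma:revlex_step} iteratively. In the Lex zone one refines Proposition \ref{prop:lex}, exploiting that Lex segments are stable under both row-removal and column-removal, so the additive inequality collapses to a chain of identities already implicit in that proposition. The transition zone is delicate: the paper observes empirically that for each $N$ there are at most three crossovers between $e(\K[\Lex(u)])$ and $e(\K[\RevLex(u)])$, so one should attempt to certify these crossovers structurally (perhaps by extracting an explicit closed form for the crossover locations as a function of $N$ via the formulas of Lemmas \ref{lemma:mult_revlex}--\ref{lemma:mult_lex}) and then verify the arithmetic inequality over the finite number of resulting cases.

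The hardest part, I expect, is controlling the transition zone. The rare ties at $u = 7, 24, 40$ show that genuine arithmetic degeneracies occur, and the estimates in Lemma \ref{lemma:revlex_binomsum} are already tight in the direction they are stated; extending them symmetrically in both directions appears to require noticeably sharper binomial identities. A backup plan, should the inductive approach stall in the transition zone, is a direct combinatorial route: for any strongly stable partition $\lambda$ that is neither a Lex nor a RevLex segment, construct an explicit injection from the maximal NE-paths of $\min\bigl(\Lex(v),\RevLex(v)\bigr)$ into the maximal NE-paths of $\lambda$, together with a witness for strict inequality. Concretely, one would look for a box-move $\lambda \to \lambda^\sharp$ preserving $|\lambda|$ and the largest part, such that $f(\lambda^\sharp) \le f(\lambda)$, and such that iterating this move terminates at a Lex or RevLex segment. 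Identifying the correct move uniformly — and proving that it never increases $f$ across the transition zone — is in my view the true technical heart of Conjecture \ref{conj_comb}.
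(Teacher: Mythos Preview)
This statement is Conjecture~\ref{conj_comb}, the combinatorial reformulation of Conjecture~\ref{conj}; the paper does \emph{not} prove it. Theorem~\ref{thm:main} establishes only the three partial cases listed there (computation for $n\le 80$, RevLex for $1\le r\le 50$, Lex for $n-25\le r\le n$), and the remaining range is left open. So there is no ``paper's own proof'' to compare against, and your proposal must be judged on its own terms.

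What you have written is a research plan, not a proof, and you say so yourself (``The hardest part, I expect, is \ldots'', ``should the inductive approach stall''). The concrete gap is precisely the step you flag: the arithmetic inequality $m(|L'|)+m(|L''|)\ge m(|L|)$ in the transition zone. In the Lex zone the paper's Proposition~\ref{prop:lex} works because removing the top row or last column of a Lex diagram again yields a Lex diagram with a controlled parameter $s''\ge s'$, so one is comparing Lex to Lex throughout and monotonicity of $e(\K[\Lex(\cdot)])$ in $s$ suffices. In the RevLex zone Lemma~\ref{lemma:revlex_step} works because the exact doubling $e(\K[\RevLex(u_{n+1})])=2e(\K[\RevLex(u_n)])$ matches the diagonal-stripping recursion. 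In the transition zone neither mechanism applies: $L'$ and $L''$ may land on opposite sides of a crossover, so $m(|L'|)$ and $m(|L''|)$ are realized by different orderings, and no known identity links them to $m(|L|)$. Your suggestion to ``certify the crossovers structurally'' via Lemmas~\ref{lemma:mult_revlex}--\ref{lemma:mult_lex} amounts to solving $2^{n-1}-2^{n-r-1}=\sum_{i\ge k}\binom{n-1}{i}-\binom{n-k+j-2}{j-1}$ for the crossover location as a function of $n$; this has no closed form, and the empirical observation that there are at most three crossovers per $n$ is itself unproved for $n>1000$. The backup injection idea is reasonable as a strategy, but you have not identified the box-move, and the ties at $u=7,24,40$ show that any such move cannot be strictly decreasing in general. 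In short, the proposal correctly isolates where the difficulty lies but does not supply the missing idea; the conjecture remains open.
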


Recall that the \emph{dominance order} on the set of partitions of a number $v$ is defined as follows. Let $\tau=(\tau_1, \ldots, \tau_n)$ and $\lambda=(\lambda_1, \ldots, \lambda_n)$ be partitions of $v$ with $\tau_1 \ge \tau_2 \ge \dots \ge \tau_n \ge 0$ and $\lambda_1 \ge \lambda_2 \ge \dots \ge \lambda_n \ge 0$. We say that $\tau \le \lambda$ if 
\[
 \tau_1 + \dots + \tau_k \le \lambda_1 + \dots + \lambda_k \ \mbox{for all} \ 1 \le k \le n.
\]
We can also say that $\tau \le \lambda$ if the diagram of $\tau$ can be obtained by that of $\lambda$ by ``moving boxes down to the left'' in the (shifted) Ferrers diagram. With this ordering, $\mathcal{P}$ is a bounded poset, with the two partitions in Conjecture \ref{conj_comb} as lower and upper bound.

\end{document}